\RequirePackage[l2tabu, orthodox]{nag}
\documentclass[a4paper,noamsfonts]{amsart}

\usepackage[T1]{fontenc}
\usepackage[tt/osf=false]{cfr-lm}
\usepackage[british]{babel}

\usepackage{amsmath}
\usepackage{amsthm}
\usepackage{amssymb}
\usepackage{stmaryrd}
\usepackage[foot]{amsaddr}
\usepackage{mathtools}

\usepackage{pgfplots}
\pgfplotsset{compat=1.15}
\usetikzlibrary{cd}
\usetikzlibrary{arrows}
\usetikzlibrary{decorations.markings}
\usepackage{pinlabel}

\usepackage{multirow}
\usepackage[dvipsnames]{xcolor}
\usepackage{graphicx}
\usepackage{booktabs}
\usepackage{todonotes}
\usepackage{pinlabel}

\usepackage{subcaption}

\DeclareCaptionLabelFormat{xx}{#2}
\usepackage{enumitem}
\usepackage[backend=biber, style=numeric, maxnames=8, isbn=false, sortcites=true, backref]{biblatex}
\AtEveryBibitem{\clearlist{language}}
\usepackage{hyperref}
\hypersetup{
    colorlinks,
    linkcolor={red!50!black},
    citecolor={blue!50!black},
    urlcolor={blue!80!black}
}

\usepackage{zref-clever}
\usepackage{microtype}

\zcsetup{noabbrev, cap}
\zcsetup{countertype={thm=theorem}}
\newtheorem{thm}{Theorem}
\numberwithin{thm}{section}
\newtheorem*{thm*}{Theorem}
\newtheorem*{mainthm*}{Main Theorem}
\AddToHook{env/lem/begin}{\zcsetup{countertype={thm=lemma}}}
\newtheorem{lem}[thm]{Lemma}
\AddToHook{env/prp/begin}{\zcsetup{countertype={thm=proposition}}}

\AddToHook{env/cor/begin}{\zcsetup{countertype={thm=corollary}}}
\newtheorem{cor}[thm]{Corollary}
\zcRefTypeSetup{conjecture}{
Name-sg = Conjecture ,
name-sg = conjecture ,
Name-pl = Conjectures ,
name-pl = conjectures ,
}
\AddToHook{env/conj/begin}{\zcsetup{countertype={thm=conjecture}}}

\zcRefTypeSetup{question}{
Name-sg = Question ,
name-sg = question ,
Name-pl = Questions ,
name-pl = questions ,
}
\AddToHook{env/question/begin}{\zcsetup{countertype={thm=question}}}

\theoremstyle{definition}
\AddToHook{env/defn/begin}{\zcsetup{countertype={thm=definition}}}
\newtheorem{defn}[thm]{Definition}
\AddToHook{env/ex/begin}{\zcsetup{countertype={thm=example}}}

\zcRefTypeSetup{cons}{
Name-sg = Construction ,
name-sg = construction ,
Name-pl = Constructions ,
name-pl = constructions ,
}
\AddToHook{env/cons/begin}{\zcsetup{countertype={thm=cons}}}

\zcRefTypeSetup{prob}{
Name-sg = Problem ,
name-sg = problem ,
Name-pl = Problems ,
name-pl = problems ,
}
\AddToHook{env/prob/begin}{\zcsetup{countertype={thm=prob}}}
\theoremstyle{remark}
\AddToHook{env/rem/begin}{\zcsetup{countertype={thm=remark}}}
\newtheorem{rem}[thm]{Remark}
\AddToHook{env/notation/begin}{\zcsetup{countertype={thm=notation}}}

\DeclarePairedDelimiter{\abs}{\lvert}{\rvert}

\newcommand{\df}{\textit}
\newcommand{\union}{\cup}
\newcommand{\inter}{\cap}

\newcommand{\mc}{\mathcal}

\newcommand{\IN}{\mathbb{N}}
\newcommand{\IZ}{\mathbb{Z}}
\newcommand{\IR}{\mathbb{R}}

\newcommand{\IC}{\mathbb{C}}

\newcommand{\IQ}{\mathbb{Q}}
\newcommand{\IH}{\mathbb{H}}
\newcommand{\PSL}{\mathsf{PSL}}

\DeclareMathOperator{\hconv}{conv_{\IH^2}}

\DeclareMathOperator{\Sing}{Sing}

\DeclareMathOperator{\Teich}{Teich}
\DeclareMathOperator{\Stab}{Stab}

\DeclareMathOperator{\Fix}{Fix}
\DeclareMathOperator{\Ax}{Ax}

\DeclareMathOperator{\Mod}{Mod}

\DeclareMathOperator{\Interior}{int}

\begin{document}

\title[Quasi-Fuchsian groups \& $q$-real numbers]{Quasi-Fuchsian groups\\and complex realisations\\of $q$-deformed real numbers}
\author[A. Elzenaar]{Alex Elzenaar}
\address{School of Mathematics, Monash University, Melbourne}
\email{alexander.elzenaar@monash.edu}
\thanks{The author was supported by an Australian Government Research Training Program Scholarship during the period that this work was undertaken.
He thanks Sophie Morier-Genoud and Valentin Ovsienko for helpful discussions. This document was prepared without the use of any generative AI}

\subjclass[2020]{Primary 05A30; Secondary 20H10, 30F40, 30F60, 37F32}
\keywords{$q$-rationals, $q$-reals, Farey tessellation, quasiconformal deformation spaces, hyperbolic triangle groups}

\begin{abstract}
  We relate the theory of $q$-rational and $q$-real numbers introduced by Morier-Genoud and Ovsienko to the classical theory of Kleinian groups
  and their Teichm\"uller spaces. This provides a geometric point of view on several recent results about realisations of $q$-rationals for particular
  values of $ q \in \IC $. As an application we resolve a conjecture of Bapat, Becker, and Licata on the topology of the $q$-deformed Farey tessellation.
\end{abstract}

\maketitle

\section{Introduction}
Discrete subgroups of matrix groups (in algebraic group theory) and $q$-series and $q$-analogues (in combinatorics) are well-known to be
connected via the theory of modular and automorphic forms and $ \vartheta$-functions~\cite{ono,farkas}. In this note,
we explain a different connection: we show that the $q$-real numbers introduced by Morier-Genoud and Ovsienko~\cite{moriergenoid20,moriergenoid22}
(see \zcref{sec:qrationals} below) can be interpreted as the limit sets of a family of quasi-Fuchsian groups. The theory of such limit sets dates
back to work of Fricke and Klein in the 1880s and 1890s~\cite{fricke_klein} and an extensive literature was developed in the 1960s by Ahlfors, Bers, Maskit, and others~\cite{ahlfors60,maskit70,bers70}.

The exact family of groups which we consider, the $q$-deformed modular groups $ \PSL(2,\IZ)_q $, is defined in \zcref{sec:qmodulargp} below. In Morier-Genoud,
Ovsienko, and Veselov~\cite{moriergenoud24}, it is argued that the analytic behaviour of the $q$-reals (e.g.\ convergence of the corresponding power series in $q$)
depends on the faithfulness of the canonical representation $ \rho_q : \PSL(2,\IZ) \to \PSL(2,\IZ)_q $. By the standard machinery of quasiconformal
deformation spaces, the connected component of
\begin{displaymath}
  \{ q \in \IC : \rho_q \;\text{is faithful} \}
\end{displaymath}
which contains the identity representation $ q = -1 $ actually \emph{coincides} with the connected component of
\begin{displaymath}
  \{ q \in \IC : \text{the image of}\;\rho_q \;\text{is discrete} \}
\end{displaymath}
containing $ q=-1$. The interior $ \mc{Q} $ of this connected component is described in \zcref{lem:shape_of_q}. The point is that the
statement `$ \PSL(2,\IZ)_q $ is discrete' carries much more geometric information about data like fixed points of elements of $ \PSL(2,\IZ)_q $ than the
statement `$\PSL(2,\IZ)_q \simeq \PSL(2,\IZ) $'---even though the underlying moduli space $ \mc{Q} $ is the same for both statements---and this additional
information is what we rely on to develop the connection with limit sets. Our main result (\zcref{thm:holomotion}) is classical and well-known to people
who work with quasi-Fuchsian groups, but we can use it to deduce many facts about embeddings of the $q$-real numbers into $ \IC $ which seem difficult to prove
directly from the combinatorial viewpoint.

We already used this method in joint work with Gong, Martin, and Schillewaert~\cite{ems24bd} to prove that the Taylor expansions around $0$ of $q$-rational numbers
have radius of convergence at least $ (3-\sqrt{5})/2 $, which is a special case of a conjecture by Leclere, Morier-Genoud, and Ovsienko~\cite[Conjecture~1.1]{leclere24}---more
precisely, Morier-Genoud, Ovsienko, and Veselov~\cite{moriergenoud24} observed that the statement about the radius of convergence is equivalent to a statement about the faithfulness of the Burau
representation $ B_3 $, and we proved this equivalent statement. Recently, Etingoff and Ovsienko~\cite{etingof26} used entirely different techniques to prove that the radii of convergence
of the Taylor expansions of $q$-real numbers satisfy the same bound, which resolves the entire conjecture of \cite{leclere24}.

In this paper, we aim to develop the theory more systematically to make it accessible to people interested in $q$-deformed number systems. To show that the connection
is useful, we use it to prove a conjecture of Bapat, Becker, and Licata~\cite[Conjecture~2.15]{bapat22} (see \zcref{cor:shape_of_qfareytess}). There are also applications
of the theory of $q$-rationals to the study of limit sets: in \zcref{rem:jones_poly} we observe that parts of the limit sets of the groups $ \PSL(2,\IZ)_q $ are obtained as the closure
of the set of values of polynomials obtained from the Jones polynomials of rational links by making all coefficients positive.

\subsection{$q$-rationals and $q$-reals}\label{sec:qrationals}
The $q$-deformed integers
\begin{displaymath}
  [n]_q \coloneq 1 + q + \cdots + q^{n-1} = \frac{1-q^n}{1-q} \;\text{for}\; n \in \IN
\end{displaymath}
were first introduced by Euler~\cite{euler01} and are the genesis of the contemporary study of `$q$-analogues' for
combinatorial objects (see for instance Gasper and Rahman~\cite{gasper}). Morier-Genoud and Ovsienko used the theory
of continued fractions to introduce $q$-analogues for the rational numbers~\cite{moriergenoid20} and proved convergence
results that showed that infinite continued fractions can be used to introduce $q$-analogues for all real numbers~\cite{moriergenoid22}.
Abstractly, recall that $ \PSL(2,\IZ) $ acts transitively in $ \IQ \union \{1/0\} $ and is generated by the matrices
\begin{equation}\label{eq:psl2z}
  R = \begin{bmatrix} 1 & 1 \\ 0 & 1 \end{bmatrix}\; \text{acting as}\; x \mapsto x + 1,\; \text{and}\; S = \begin{bmatrix} 0 & -1 \\ 1 & 0 \end{bmatrix}\; \text{acting as}\; x \mapsto -\frac{1}{x}.
\end{equation}
This transitivity, which is essentially continued fraction decomposition, can be used to show that there is a uniquely defined map
\begin{displaymath}
  [\cdot]_q : \IQ \union \{ 1/0 \} \to \IZ\llbracket q\rrbracket
\end{displaymath}
which satisfies $ [0]_q = 0 $ and the two recursion relations
\begin{displaymath}
  [x+1]_q = q[x]_q + 1 \;\text{and}\; \Big[ -\frac{1}{x} \Big]_q = -\frac{1}{q[x]_q};
\end{displaymath}
if $ f \in \IQ $ then $ [f]_q $ is the \df{$q$-analogue} of $ f $, and the image of $ [\cdot]_q $ is called the set of \df{$q$-rationals}.
Further, if $ (f_i) $ is a sequence of elements of $ \IQ $ converging to some $ \xi \in \IR $, then the $q$-coefficients of $ [f_i]_q $ eventually stabilise and
so can be used to define a power series $ [\xi]_q \in \IZ\llbracket q\rrbracket  $ called the \df{$q$-analogue} of $ \xi $; the image of $ \IR $ under $ \xi \mapsto [\xi]_q $
is the set of \df{$q$-reals}.

\subsection{The $q$-modular group}\label{sec:qmodulargp}
For $ q \in \IC $, let
\begin{equation}\label{eq:psl2zq}
  A = \sqrt{-q}\begin{bmatrix} -q^{-1} & q^{-1} \\ 0 & 1 \end{bmatrix}\;\text{and}\; B = \sqrt{-q}\begin{bmatrix} 1 & 0 \\ 1 & -q^{-1} \end{bmatrix}.
\end{equation}
Define $ \PSL(2,\IZ)_q \coloneq \langle A, B \rangle $. When $ q = -1 $, this is the usual embedding of $ \PSL(2,\IZ) $ into $ \PSL(2,\IC) $. This
group lies on the boundary of the set $ \mc{Q} $ defined by
\begin{displaymath}
  \mc{Q} = \Interior\, \{ q \in \IC : \PSL(2,\IZ)_q\; \text{is discrete and isomorphic to}\; \PSL(2,\IZ) \}.
\end{displaymath}
The following result follows from the Ahlfors--Bers theory of quasiconformal deformation spaces~\cite[\S 4.3]{matsuzaki}. More precisely, each component of the set $ \mc{Q} $
is doubly covered by the $ (2,3)$-Riley slice $ \mc{R}_{2,3} $ studied by Elzenaar, Martin, and Schillewaert~\cite{ems22M} via a map described in Elzenaar, Gong,
Martin, and Schillewaert~\cite[\S 6]{ems24bd}.
\begin{thm}\label{lem:shape_of_q}
  The open set $\mc{Q} \subset \IC $ has two connected components, each of which is conformally equivalent to a punctured disc (the two punctures are at $0$ and $ \infty $).
  For all $ q \in \mc{Q} $, $ \IH^3/\PSL(2,\IZ)_q $ is a hyperbolic orbifold, supported on a $3$-ball, with two ideal orbifold arcs of cone angles $ \pi $ and $ 2\pi/3 $.
  The point $ -1 $ lies on the boundary of $ \mc{Q} $ (in fact, $ -1 $ is the intersection of the closure of the two components), and
  the corresponding orbifold is obtained by pinching an additional simple closed curve on the boundary of the $3$-ball down to length $0$. \qed
\end{thm}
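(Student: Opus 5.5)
The plan is to reduce the statement to the known structure of the $(2,3)$-Riley slice, as the paragraph preceding the theorem suggests. First I compute the traces and orders of the generators. We have $\det A = (-q)(-q^{-1}) = 1$ and $\det B = (-q)(-q^{-1}) = 1$. The traces are
\begin{displaymath}
  \operatorname{tr} A = \sqrt{-q}\,(1 - q^{-1}), \qquad \operatorname{tr} B = \sqrt{-q}\,(1 - q^{-1}),
\end{displaymath}
so $\operatorname{tr}^2 A = \operatorname{tr}^2 B = -(q-1)^2/q = 2 - q - q^{-1}$. This equals $4$ exactly when $q = -1$, so for $q \neq -1$ the elements $A$ and $B$ are not parabolic. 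I would then identify the generators of $\PSL(2,\IZ)_q$ that play the roles of $S$ and $ST$; the natural candidates are $AB^{-1}$ (or $BA$) and a suitable product. I expect one of them to have trace $0$ identically in $q$ (order $2$) and another to have trace $\pm 1$ identically (order $3$). If so, $\PSL(2,\IZ)_q$ is, for every $q$, a two-generator group generated by an elliptic of order $2$ and an elliptic of order $3$, with a single remaining complex parameter.

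The standard parameter for such pairs is the commutator trace, or equivalently the Riley parameter $\gamma$, obtained by conjugating to the normal form $X = \begin{bmatrix} i & 1 \\ 0 & -i\end{bmatrix}$, $Y = \begin{bmatrix} e^{i\pi/3} & 0 \\ \gamma & e^{-i\pi/3}\end{bmatrix}$. I would compute $\gamma$ as a rational function of $q$ through $\operatorname{tr}[X,Y] - 2$. The computation above suggests this is an affine function of $q + q^{-1}$, i.e.\ of $\operatorname{tr}^2 A$. In that case the map $q \mapsto \gamma$ is the Joukowski-type map $q \mapsto q + q^{-1}$, up to an affine change. This map is $2$-to-$1$, invariant under $q \mapsto q^{-1}$, and a branched cover of $\hat{\IC}$ with branch points $q = \pm 1$. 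Consequently, $\mc{Q}$ is the preimage of $\mc{R}_{2,3}$, since discreteness and faithfulness are conjugacy invariants.

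I then invoke the known description of $\mc{R}_{2,3}$ from \cite{ems22M} and Ahlfors--Bers/Maskit theory \cite[\S 4.3]{matsuzaki}. The Riley slice is the quasiconformal deformation space of a sphere with cone points of orders $2,3$ and two punctures; it is biholomorphic to $\Teich(S_{0,4})/\Mod \cong$ a punctured disc, namely a neighbourhood of $\infty$ in the $\gamma$-plane whose complement is a closed topological disc. The branch values of $q \mapsto q+q^{-1}$ are $\pm 2$, corresponding to $q = \pm 1$. The key check is that both branch values lie in the closed complement of $\mc{R}_{2,3}$; in particular $q=-1$ gives the Fuchsian $\PSL(2,\IZ)$, a cusp group lying on $\partial\mc{R}_{2,3}$. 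Granting this, the preimage of the annulus-like set $\mc{R}_{2,3}$ under an unbranched $2$-fold cover of the complement of a disc containing both branch points splits into two components. These are swapped by $q \mapsto q^{-1}$, one containing a neighbourhood of $0$ and the other a neighbourhood of $\infty$, and each maps biholomorphically to $\mc{R}_{2,3}$, hence is a punctured disc. This gives the two-components assertion. The point $-1$ maps to the boundary cusp point; since it is the unique preimage of a branch value, it lies in both closures.

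The orbifold statements are transported from $\mc{R}_{2,3}$. For $\gamma$ in the Riley slice, the group is a quasi-Fuchsian deformation of a Fuchsian $(2,3,\infty,\infty)$ sphere group. Its quotient $\IH^3/G$ is therefore a $3$-ball with two orbifold arcs of angles $\pi$ and $2\pi/3$, each with both endpoints on the boundary sphere. At $q=-1$ the curve on the conformal boundary separating the two cone points from the two punctures, equivalently the curve corresponding to the parabolic $R$, is pinched to give the modular group. The main obstacle is the explicit computation that the map $q \mapsto \gamma$ really factors through $q + q^{-1}$ with branch points outside $\mc{R}_{2,3}$. If instead it is of higher degree, the component count must be redone by analysing the branch locus against the known shape of $\mc{R}_{2,3}$ from \cite{ems24bd}.
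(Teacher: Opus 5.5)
The reduction to $\mc{R}_{2,3}$ is the right idea. However, the step you flag as the ``main obstacle'' fails, and it reverses the direction of the covering.

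Put $X = ABA$ (trace $0$) and $Y = AB$ (trace $1$), so that $A = Y^{-1}X$. In your normal form $\operatorname{tr}(XY) = \gamma-\sqrt{3}$. The order-two generator lifts to $\mathrm{SL}$ only up to sign, so the $\PSL$-invariant of the marked pair is $(\gamma-\sqrt{3})^2 = \operatorname{tr}^2(XY) = \operatorname{tr}^2 A = 2-q-q^{-1}$. Thus $\operatorname{tr}[X,Y] = (\gamma-\sqrt{3})^2-1 = 1-q-q^{-1}$ is affine in $q+q^{-1}$, but it is quadratic in $\gamma$.

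It follows that $\gamma = \sqrt{3}\pm\sqrt{2-q-q^{-1}}$ is not a rational function of $q$. It is not even single-valued on a component of $\mc{Q}$: near $q=0$ we have $2-q-q^{-1}\sim -q^{-1}$, which winds once around $\infty$, so the square root changes sign around the puncture.

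Geometrically, the $\gamma$-plane carries an involution $\iota\colon\gamma\mapsto 2\sqrt{3}-\gamma$. It comes from conjugating by the M\"obius map that fixes $0$ and swaps $\infty$ with $i/2$, the two fixed points of $X$. Because $X$ is a rotation by $\pi$, this map preserves $X$ in $\PSL(2,\IC)$, and it sends $Y_\gamma$ to $Y_{2\sqrt{3}-\gamma}$. So your claim that each component of $\mc{Q}$ maps biholomorphically onto $\mc{R}_{2,3}$ is false. It is $\mc{R}_{2,3}$ that double covers each component. The two points $\gamma=\sqrt{3}\pm 2$ (the $0/1$ and $1/1$ cusps on the two halves of the Fuchsian locus) both lie over $q=-1$ and are folded together. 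This is exactly the relation the paper quotes just before the statement.

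To repair the argument, work in the character coordinate $\tau=\operatorname{tr}^2A$ instead of $\gamma$.
\begin{itemize}
\item The involution $\iota$ fixes only $\sqrt{3}$ and $\infty$. At $\gamma=\sqrt{3}$ we have $(XY)^2=1$, which gives a finite dihedral group. So $\iota$ acts freely on $\mc{R}_{2,3}$, and $\mc{R}_{2,3}/\iota$ is again a punctured disc about $\tau=\infty$.
\item Now run your branched-cover argument for $q\mapsto \tau = 2-q-q^{-1}$. Its branch values are $\tau=4$ (at $q=-1$, giving $\PSL(2,\IZ)$, a boundary point) and $\tau=0$ (at $q=1$, where $A$ has order $2$ and the group is finite). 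Neither lies in $(\mc{R}_{2,3}/\iota)\cup\{\infty\}$.
\item Hence $\mc{Q}$ consists of two punctured discs about $0$ and $\infty$, swapped by $q\mapsto q^{-1}$, each biholomorphic to $\mc{R}_{2,3}/\iota$.
\end{itemize}
This shows $-1$ lies in both closures. That the closures meet \emph{only} at $-1$ needs more, for example local connectivity of the boundary of $\mc{R}_{2,3}/\iota$.

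Several smaller errors also need fixing.
\begin{itemize}
\item The Riley slice is $\Teich/\langle\omega\rangle$ for a single Dehn twist $\omega$, not $\Teich/\Mod$; the latter is the moduli space.
\item For $q<-1$ the Fuchsian model is a disc with cone points of orders $2$ and $3$ and a funnel. Its conformal boundary is a sphere with cone points $2,2,3,3$ and no punctures. A $(0;2,3,\infty,\infty)$ group would have fundamental group $\IZ/2\IZ*\IZ/3\IZ*\IZ$.
\item The curve pinched at $q=-1$ separates the four cone points into two pairs $\{2,3\}$, which yields the two $(2,3,\infty)$-spheres.
\end{itemize}
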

See \zcref{fig:q_def_sp} for a picture of the set $ \mc{Q} $. We will discuss some aspects of \zcref{lem:shape_of_q} in more detail later in this note.
The closure $ \overline{\mc{Q}} $ consists of discrete groups that are all mutually isomorphic to $ \IZ/2\IZ * \IZ/3\IZ$, apart from the two points that come from
filling in the punctures of $ \mc{Q} $. At these two points, some M\"obius transformations in the group converge to constant functions:
for instance, as $ q \to \infty $ the function $ z \mapsto (-z + 1)/q $ represented by $A$ converges uniformly on compact subsets of $ \IC $ to the constant function $0$.

\begin{figure}
  \centering
  \includegraphics[width=.7\textwidth]{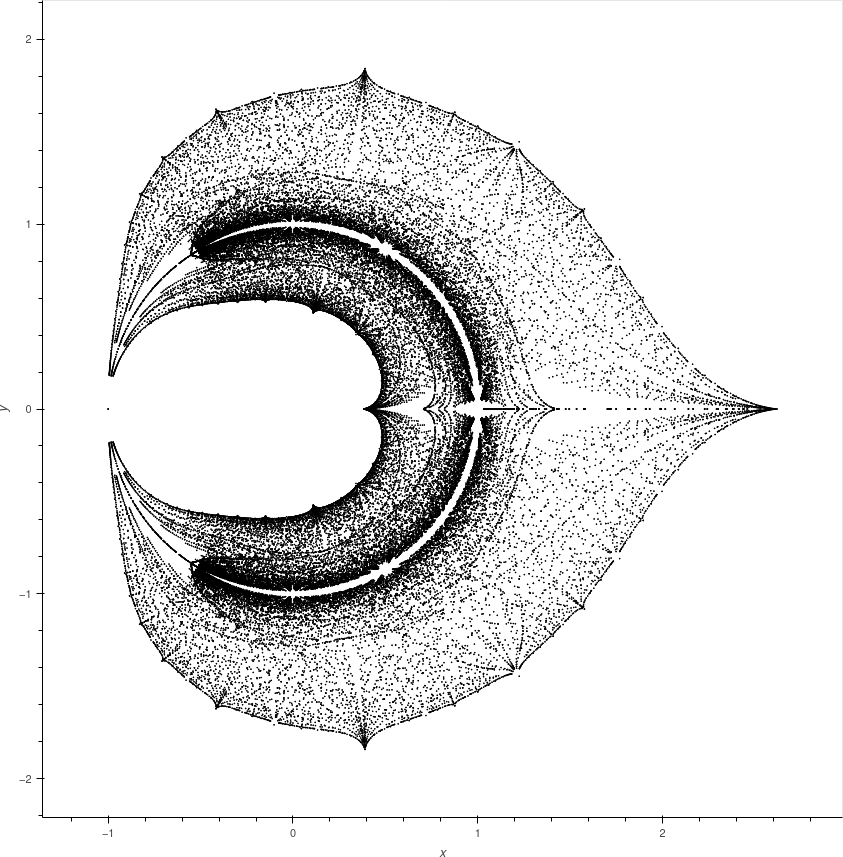}
  \caption{The unshaded region is $\mc{Q}$. The two connected components are exchanged by inversion in the unit circle.\label{fig:q_def_sp}}
\end{figure}

\subsection{Notation and terminology for Kleinian groups}
A \df{Kleinian group} is a discrete subgroup of $ \PSL(2,\IC) $. If $ \Gamma $ is Kleinian, then it admits two natural actions: an action on the Riemann
sphere $ \hat{\IC} = \IC \union \{\infty\} $ by M\"obius transformations, and an action on $ \IH^3 $ by hyperbolic isometries. The action on $ \IH^3 $ is
discontinuous, so $ \IH^3/\Gamma$ is a complete hyperbolic $3$-orbifold (conversely, every complete hyperbolic $3$-orbifold arises in this way). On the other hand,
$ \hat{\IC} $ admits a decomposition into two complementary subsets: the \df{limit set} $ \Lambda(\Gamma) $ of accumulation points of orbits of $ \Gamma $ (which is equal
to the closure of the set of fixed points of infinite-order elements of $ \Gamma $) on which $\Gamma$ acts ergodically, and the \df{ordinary set} $ \Omega(\Gamma) $
which is the maximal open subset of the sphere on which $ \Gamma $ acts discontinuously. The quotient $ \Omega(\Gamma)/\Gamma $ is a (possibly empty, possibly disconnected)
Riemann surface which forms the conformal boundary of $ \IH^3/\Gamma $. When $ \Gamma $ is finitely generated, $\Omega(\Gamma)/\Gamma$ has finitely many components, each
of which has finite genus and finitely many marked points (this is the \df{Ahlfors finiteness theorem}).
For further background see Maskit~\cite{maskit} and Matsuzaki and Taniguchi~\cite{matsuzaki}.

A Kleinian group $\Gamma$ is \df{quasi-Fuchsian} if there exists a Jordan curve $ C \subset \hat{\IC} $ such that $ \Lambda(\Gamma) \subset C $ and such that $ \Gamma $
preserves $C$ and the two components of $ \hat{\IC} \setminus C $. Alternatively, $\Gamma$ preserves $C$ and its orientation. This definition is equivalent to the
existence of a Fuchsian group $ \Gamma_0 $ (i.e.\ a discrete subgroup of $ \PSL(2,\IR) $ acting discontinuously on the upper half-plane $ \IH^2 $) and a quasiconformal
map $ \phi : \hat{\IC} \to \hat{\IC} $ such that $ \Gamma = \phi \Gamma_0 \phi^{-1} $. In particular, the curve $C$ will actually be a \df{quasicircle}, a quasiconformal
deformation of a round circle. This all follows with some minor care from the same arguments as Theorem~2 of Maskit~\cite{maskit70} where the case $ C = \Lambda(\Gamma) $
is considered. An informal but detailed discussion of quasi-Fuchsian groups may be found in Mumford, Series, and Wright~\cite{indras_pearls}.

In \zcref{fig:quasifuchsian} we include a picture of the limit set of a quasi-Fuchsian group, namely $ \PSL(2,\IZ)_q $ for some $ q \in \mc{Q} $.

\begin{figure}
  \centering
  \includegraphics[width=.7\textwidth]{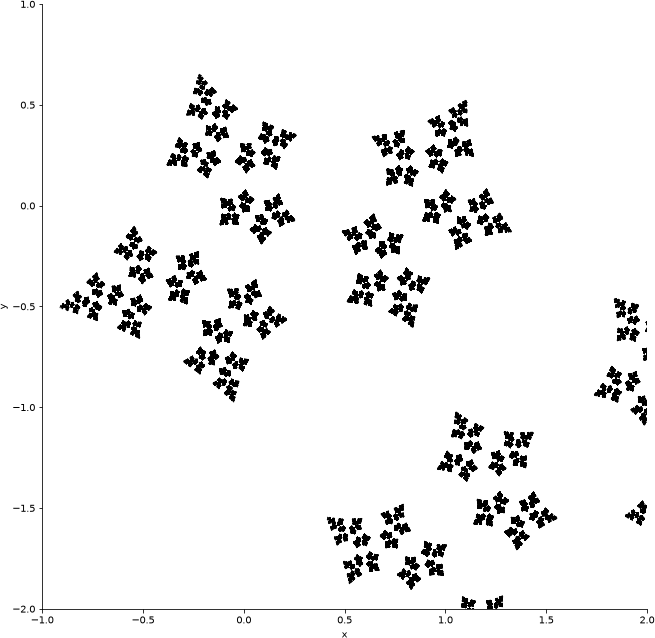}
  \caption{The limit set of $ \PSL(2,\IZ)_q $ where $ q = 0.33+0.39i $. One can alternatively view this as the set of complex realisations of the $q$-reals for $ q = 0.33+0.39i $.\label{fig:quasifuchsian}}
\end{figure}

\section{The classical modular group $\PSL(2,\IZ)$}
Our results for general $q$-rational numbers will be derived from results about deformations of Kleinian groups. In order to do this, we need to
fix a basepoint from which to start the deformations. The classical modular group $ \PSL(2,\IZ) $ is the best choice. In this section, we fix $ q = -1 $, so
\begin{displaymath}
  A = \begin{bmatrix} 1 & -1 \\ 0 & 1 \end{bmatrix}\;\text{and}\; B = \begin{bmatrix} 1 & 0 \\ 1 & 1 \end{bmatrix}
\end{displaymath}
generate $ \PSL(2,\IZ) $. (Compared to the generators in \eqref{eq:psl2z}, $ A = R^{-1} $ and $ B = SR^{-1} S $.)

To understand the action of $ \PSL(2,\IZ) $ geometrically, let us recall some standard concepts from (for instance) Maskit~\cite[\S\S I.C and~II.H]{maskit}. Let $ g \in \PSL(2,\IC) $
be a M\"obius transformation which does not fix $ \infty $. By elementary conformal geometry, $g$ sends the pencil $P$ of circles through $ \xi = g^{-1}(\infty) $ and $ \infty $
to the pencil $Q$ of circles through $ \infty $ and $ \zeta = g(\infty) $. Further, $g$ maps the pencil of circles orthogonal to $P$ (the pencil of circles centred at $ \xi $)
to the pencil of circles orthogonal to $ Q $ (the pencil of circles centred at $\zeta$). By the intermediate value theorem, there is a circle $I^-(g) $ centred at $ \xi $
which is mapped to a circle $ I^+(g) $ centered at $ \zeta $ of the same radius. The circles $ I^+(g) $ and $ I^-(g) $ are called the \df{isometric circles} of $ g $.
\begin{lem}\label{lem:isometric_circles}
  If
  \begin{displaymath}
    g = \begin{bmatrix} a & b \\ c & d \end{bmatrix} \in \PSL(2,\IC)
  \end{displaymath}
  then the isometric circles of $ g $ have radius $ 1/\abs{c} $; the centre of $ I^-(g) $ is $ -d/c $ and the centre of $ I^+(g) $ is $ a/c $.
  The transformation $ g $ acts to send the interior of $ I^-(g) $ onto the exterior of $ I^+(g) $. \qed
\end{lem}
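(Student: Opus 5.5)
The plan is a direct computation with the derivative of $g$. Normalise $ad-bc=1$; since $g$ does not fix $\infty$ we have $c\neq 0$. Write
\begin{displaymath}
  g(z) = \frac{az+b}{cz+d}, \qquad g'(z) = \frac{1}{(cz+d)^2},
\end{displaymath}
so that $\abs{g'(z)} = 1/\abs{c}^2\abs{z+d/c}^2$. The pole is $g^{-1}(\infty) = -d/c = \xi$. Using $ad-bc=1$ we also get the expansion
\begin{displaymath}
  g(z) = \frac{a}{c} - \frac{1}{c^2(z+d/c)},
\end{displaymath}
which shows that $g(\infty) = a/c = \zeta$. This formula exhibits $g$ as a composition of three maps: the translation $z \mapsto z+d/c$, the map $w\mapsto -1/(c^2w)$ (an inversion composed with a rotation and scaling), and the translation by $a/c$.

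The main step is to track circles centred at $\xi$ through this composition.
\begin{itemize}
  \item The first translation sends the circle of radius $r$ about $\xi$ to the circle of radius $r$ about $0$.
  \item The map $w\mapsto -1/(c^2w)$ sends this to the circle of radius $1/(\abs{c}^2 r)$ about $0$.
  \item The last translation moves it to the circle of radius $1/(\abs{c}^2 r)$ about $\zeta$.
\end{itemize}
The radii agree exactly when $r^2 = 1/\abs{c}^2$, i.e.\ $r = 1/\abs{c}$. This identifies $I^-(g)$ as the circle of radius $1/\abs{c}$ about $-d/c$, and $I^+(g)$ as its image, the circle of the same radius about $a/c$. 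The intermediate value argument quoted before the statement already gives existence; the computation above also gives uniqueness, since $r\mapsto 1/(\abs{c}^2 r)$ is strictly decreasing. Equivalently, $I^-(g)$ is the locus $\abs{g'(z)}=1$, which makes the name `isometric' transparent.

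For the mapping of interiors, apply the same decomposition. The disc $\abs{z-\xi}<1/\abs{c}$ goes under the first translation to the disc $\abs{w}<1/\abs{c}$. The map $w\mapsto -1/(c^2w)$ reverses moduli, sending $\abs{w}<1/\abs{c}$ to $\abs{u}>1/\abs{c}$ together with $\infty$; here the point $w=0$ maps to $\infty$. The final translation then sends this onto the exterior of the circle of radius $1/\abs{c}$ about $\zeta$. The map is bijective because each of the three pieces is.

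There is no real obstacle here. The only care needed is in the normalisation: the radius formula $1/\abs{c}$ relies on $\det=1$. Since the matrix is only defined up to sign in $\PSL(2,\IC)$, one should check that every quantity used is invariant under $(a,b,c,d)\mapsto(-a,-b,-c,-d)$. This holds: $\abs{c}$, $-d/c$ and $a/c$ are all unchanged.
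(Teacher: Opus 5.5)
Your computation is correct. The decomposition $g = (u\mapsto u + a/c)\circ(w\mapsto -1/(c^2w))\circ(z\mapsto z+d/c)$ gives the radius $1/\abs{c}$, the two centres, uniqueness of the isometric circle, and the interior-to-exterior statement, and your sign-invariance check handles the $\PSL$ ambiguity.

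The paper gives no proof of its own; it states the lemma as a standard fact and points to Maskit. Your argument is exactly the standard derivation of $I^-(g)$ as the locus $\abs{cz+d}=1$.
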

Note that isometric circles are \emph{not} conformal invariants, and depend on the action of $ g$ on Euclidean space. In particular, if $ h \in \PSL(2,\IC) $
then in general it is \emph{not true} that $ I^{\pm}(hgh^{-1}) = h( I^{\pm}(g) ) $. Nevertheless, isometric circles are very important in the study
of Kleinian groups.
\begin{lem}
  Let $G$ be a Kleinian group, and let $ H = \Stab_G(\infty) $. Introduce the temporary notation $ \sigma(g) $ for the centre of $ I^-(g) $ and $ \rho(g) $ for the radius of $ I^-(g) $.
    Then a fundamental polyhedron for $G$ is given by $ A \inter B $ where $A$ is a fundamental polyhedron for $ H $ and where
  \begin{displaymath}
    B = \overline{\bigcap_{g \in G \setminus H}\{ x \in \IH^3 : \abs{x - \sigma(g)} > \rho(g) \}}.
  \end{displaymath}
  This polyhedron is called a \df{Ford domain} for $G$.\qed
\end{lem}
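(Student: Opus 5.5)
The plan is to prove the Ford domain lemma in the upper half-space model $\IH^3 = \{(z,t) : t>0\}$, using the fact that the hyperbolic isometry induced by $g$ (with $c \neq 0$) is a Euclidean isometry on its isometric hemisphere. Concretely, I will compute the Poincaré extension of $g$. The standard formula gives, for a point $x = (z,t)$, that the Euclidean scaling factor of $g$ at $x$ is $1/\abs{cx+d}^2$, where $\abs{cx+d}^2 = \abs{cz+d}^2 + \abs{c}^2t^2$. The set of points where $g$ is a Euclidean isometry is therefore the hemisphere $\abs{x - \sigma(g)} = \rho(g)$, which by \zcref{lem:isometric_circles} lies over $I^-(g)$. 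Points outside this hemisphere are contracted in Euclidean height, and points inside are expanded. In particular $g$ maps the exterior of the hemisphere over $I^-(g)$ into the interior of the hemisphere over $I^+(g)$. Moreover, the height of $g(x)$ is $t/\abs{cx+d}^2$. So for $x$ in $B$ (outside all isometric hemispheres) and $g \in G\setminus H$, the height strictly decreases: $\mathrm{ht}(g x) < \mathrm{ht}(x)$.

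For covering, take any $y \in \IH^3$. Discreteness of $G$, together with the fact that the radii $1/\abs{c}$ of isometric circles accumulate only at $0$, implies that the orbit $Gy$ has a point of maximal height. Indeed, heights of $gy$ are $t/\abs{cy+d}^2$, and only finitely many cosets $Hg$ can give height above any fixed level; this uses that the entries $(c,d)$ of the bottom rows form a discrete set modulo $H$. This is the step I expect to need the most care. I would handle it via the observation that each coset $Hg$ has a fixed bottom row, and that discreteness of $G$ forces $\{(c,d)\}$ to be discrete in $\IC^2$. Here I would be careful when $H$ contains loxodromics, but then $G$ is elementary or $\infty$ is not a limit point, so I would assume $H$ is parabolic or finite, as for $\PSL(2,\IZ)_q$.

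Now let $y' = hgy$ be a highest point. By maximality, $\mathrm{ht}(g'y') \le \mathrm{ht}(y')$ for all $g'$, which means $\abs{c'y'+d'} \ge 1$. Hence $y'$ lies in the closed region $B$. Since $H$ preserves heights and $B$, I can translate by $H$ into $A$, so $A \inter B$ meets every orbit.

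For the interior condition, suppose $x$ and $gx$ both lie in the interior of $A\inter B$ with $g \neq 1$. If $g\in H$ this contradicts $A$ being fundamental for $H$. Otherwise, the height computation above gives $\mathrm{ht}(gx) < \mathrm{ht}(x)$. Applying the same computation to $g^{-1}$, which is also in $G\setminus H$ with isometric circle $I^+(g)$, gives $\mathrm{ht}(x) < \mathrm{ht}(gx)$, a contradiction. Local finiteness of the family of hemispheres, which again follows from the discreteness of the bottom rows, shows that $B$ is a polyhedron. Together these facts show that $A \inter B$ is a fundamental polyhedron.
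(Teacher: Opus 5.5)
Your height formula, the comparison of $g$ with $g^{-1}$ for interior points, and the final translation by $H$ into $A$ are correct. The gap is at the step you flagged: a highest point in each orbit, and local finiteness of the hemispheres, do not follow from discreteness of $G$, and both justifications you give are false.

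First, a loxodromic in $H$ does not force $G$ to be elementary or $\infty\notin\Lambda(G)$. Fixed points of loxodromics always lie in the limit set. The example you cite is itself a counterexample: for $q\in\mc{Q}$ the generator $A\in\Stab(\infty)$ has eigenvalues $(-q)^{\pm 1/2}$ with $\abs{q}\neq 1$, while $\PSL(2,\IZ)_q\simeq\PSL(2,\IZ)$ is non-elementary. There the statement genuinely fails. The element $A^n g$ has bottom row $(\sqrt{-q})^n(c,d)$, so its isometric hemisphere is centred at $\sigma(g)$ with radius $\abs{q}^{-n/2}\rho(g)$. This is unbounded in $n\in\IZ$, so $B=\emptyset$.

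Second, even with $H$ trivial the bottom rows need not be discrete. Take $G$ cocompact, normalised so that no element fixes $\infty$. Then $Gy$ comes within a fixed distance of every point of the vertical geodesic above $y$, so heights in $Gy$ are unbounded and the bottom rows accumulate at $(0,0)$. Consequently every point of $\IH^3$ lies strictly inside some isometric hemisphere, and again $B=\emptyset$. So boundedness of the radii $1/\abs{c}$ is an extra hypothesis, not a consequence of discreteness. The lemma needs $\infty\in\Omega(G)$ or $\infty$ a parabolic fixed point; the paper states it without proof, citing Maskit, and omits this hypothesis too.

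What is missing is a uniform bound $\abs{c(g)}\geq c_0>0$ for $g\in G\setminus H$.
\begin{itemize}
\item If $\infty$ is a parabolic fixed point, this is Shimizu's lemma: if $z\mapsto z+\tau$ lies in $H$, then $\abs{c}\geq 1/\abs{\tau}$.
\item If $\infty\in\Omega(G)$, then $H$ is finite. After normalising $H$ to rotations about $0$, there is a disc $U=\{\abs{z}>R\}\union\{\infty\}$ precisely invariant under $H$. For distinct cosets $gH\neq H$ the discs $g(U)$ are pairwise disjoint, lie in $\{\abs{z}\leq R\}$, and have radius at least $1/(\abs{c}^2R)$. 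Comparing areas gives $\sum_{gH}\abs{c}^{-4}<\infty$.
\end{itemize}
With $c_0$ in hand your argument goes through. Heights are bounded: $\mathrm{ht}(gy)\leq 1/(c_0^2\,\mathrm{ht}(y))$. Now suppose infinitely many distinct cosets $Hg_n$ had bottom rows in a compact subset of $\IC^2\setminus\{0\}$. A convergent subsequence would produce elements $g_m g_n^{-1}\notin H$ whose lower-left entries $c_m d_n - d_m c_n$ tend to $0$, contradicting the bound. Hence only finitely many cosets satisfy $\mathrm{ht}(gy)\geq\mathrm{ht}(y)$, which gives a highest orbit point. The same argument gives local finiteness of the isometric hemispheres.
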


In \zcref{fig:psl2z_ford} we show the isometric circles of $ \PSL(2,\IZ) $ for all words up to length $5$. Actually, a fundamental domain $D$
is bounded by the isometric circles of $S$, which coicide with each other and are the circle of radius $1$ centred at $0$, and the vertical
strip of width $1$ which is a fundamental domain for $ \Stab_{\PSL(2,\IZ)}(\infty) = \langle A \rangle $. This is the usual fundamental
domain for $ \PSL(2,\IZ) $ (see e.g.\ Beardon~\cite[Example~9.4.4]{beardon}). From the fundamental domain, we see immediately that $ \IH^3/\PSL(2,\IZ) $
is an orbifold with the structure claimed in \zcref{lem:shape_of_q} shown in \zcref{fig:psl2z_quotient}: namely, it is an orbifold $O$ such that $ O \setminus \Sing(O) $ is a handlebody of genus $2$,
$ \Sing(O) $ consists of two ideal arcs (one order $2$ and one order $3$), and $ \Omega(\PSL(2,\IZ))/\PSL(2,\IZ) $ is a disjoint union of two thrice-marked
spheres, each marked with $ \{2,3,\infty\} $. The component $ \IH^2/\PSL(2,\IZ) $ is identified with the Riemann moduli space of once-punctured tori, and since
there is an anticonformal reflection mapping $ \IH^2 $ onto the lower half plane $ \overline{\IH^2} $ the two surfaces are indistinguishable other than having
opposite orientations. Transitivity of $ \PSL(2,\IZ) $ on $ \IQ \union \{\infty\} $ is equivalent to the statement that $ \IQ \union \{\infty\} $ is the set of
cusps (i.e.\ parabolic fixed points which are on the boundary of a component of the domain of discontinuity) of $ \PSL(2,\IZ) $.

The set of cusps (the classical rationals) is dense in the limit set of $ \PSL(2,\IZ) $ (the classical reals). Each classical rational lies on the boundary of a unique
copy of the domain $ D $, and the continued fraction decomposition of $ p/q $ gives the sequence of edges of the tessellation $ \PSL(2,\IZ) \cdot D $ of $ \IH^2 $
which leads from $D$ to the copy of $D$ containing $ p/q $. The fact that $ \Lambda(\PSL(2,\IZ)) = \IR \union \{\infty\} $ can be rephrased as stating that every real
number can be approximated by a sequence of copies of $D$ in this tessellation (i.e.\ an infinite walk in the dual graph to the tessellation). We will now state this
a bit more formally.

\begin{figure}
  \centering
  \includegraphics[width=.7\textwidth]{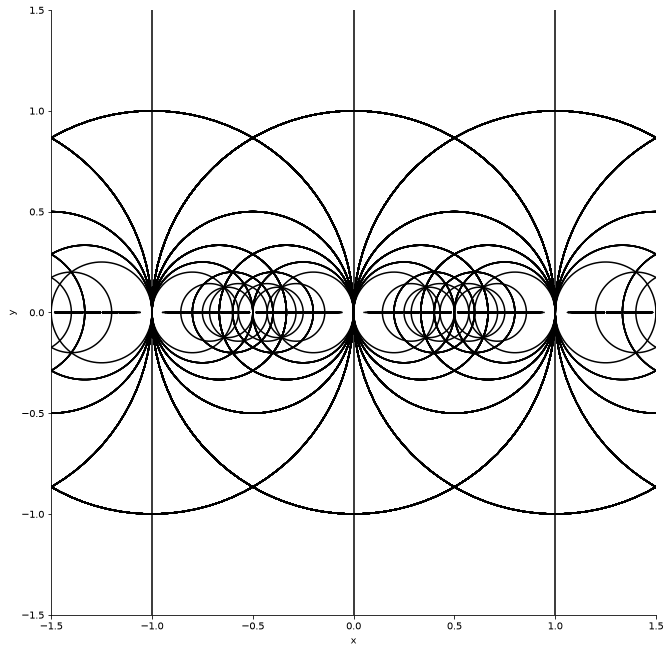}
  \caption{Isometric circles for words in $ \PSL(2,\IZ) $ of length up to $5$ in the generators $A$ and $B$.\label{fig:psl2z_ford}}
\end{figure}

\begin{figure}
  \centering
  \labellist
  \small\hair 2pt
  \pinlabel {$2$} [t] at 208 129
  \pinlabel {$3$} [b] at 232 16
  \pinlabel {$\infty$} [br] at 214 52
  \pinlabel {$\IH^2/\PSL(2,\IZ)$} at 60 63
  \pinlabel {$\overline{\IH^2}/\PSL(2,\IZ)$} at 393 80
  \endlabellist
  \includegraphics[width=.8\textwidth]{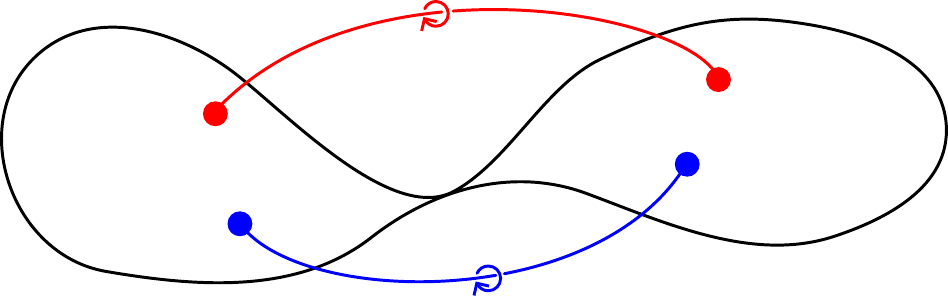}
  \caption{The orbifold $ \IH^3/\PSL(2,\IZ) $. The two components of $ \Omega(\PSL(2,\IZ)) $ are the upper halfplane $ \IH^2 $ and the lower halfplane $ \overline{\IH^2} $; each descends
  to a surface on the conformal boundary of the orbifold.\label{fig:psl2z_quotient}}
\end{figure}

If $ r/s \in \IQ \union \{\infty\} $ then it admits exactly two continued fraction expansions,
\begin{displaymath}
  \frac{r}{s} = [a_0; a_1, a_2, \cdots, a_n] \coloneq a_0 + \cfrac{1}{a_1 + \cfrac{1}{a_2 + \cfrac{1}{\ddots + \cfrac{1}{a_n}}}}
\end{displaymath}
one of odd length and one of even length.

\begin{defn}\label{defn:rational_word}
  The \df{rational word} of slope $ r/s $ is defined as follows. Suppose that $ r/s \in \IQ_{\geq 0} \union \{\infty\} $ has continued fraction expansion $ [a_0;b_1,a_1,\ldots,b_n,a_n] $ (of
  odd length with no negative coefficients). Then
  \begin{displaymath}
    W_{r/s} = A^{-a_0} B^{b_1} A^{-a_1} \cdots B^{b_n} A^{-a_n}.
  \end{displaymath}
  For $ r/s \in \IQ_{<0} $, we set $ W_{r/s} = S W_{-s/r} $ where $ S = ABA $ is the involution $ z \mapsto -1/z $.
\end{defn}
\begin{lem}
  $ W_{r/s}(\infty) = r/s $.
\end{lem}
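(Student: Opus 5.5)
The plan is to prove the identity for $ r/s \in \IQ_{\geq 0} \union \{\infty\} $ by unwinding the word from the right, and then to deduce the negative case from the definition via $ S $.

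First record the actions at $ q=-1 $: $ A(z) = z - 1 $, so $ A^{-a}(z) = z + a $, and $ B(z) = z/(z+1) $. The key observation is that $ B = SA^{-1}S^{-1} $ up to sign. Indeed,
\begin{displaymath}
  S(z) = -\frac{1}{z}, \qquad A^{-1}\big(S(z)\big) = -\frac{1}{z} + 1 = \frac{z-1}{z}, \qquad S\!\left(\frac{z-1}{z}\right) = \frac{-z}{z-1} = \frac{z}{1-z}.
\end{displaymath}
So $ SA^{-1}S(z) = z/(1-z) $, which is $ B^{-1} $. Rather than rely on this conjugation identity, it is cleaner to write $ B $ as conjugation of a translation by $ z \mapsto 1/z $. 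Since $ 1/B(z) = (z+1)/z = 1 + 1/z $, we get
\begin{displaymath}
  B^{b}(z) = \cfrac{1}{b + \cfrac{1}{z}} .
\end{displaymath}

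Next, evaluate $ W_{r/s}(\infty) $ by peeling off factors from the right. Set $ z_n = A^{-a_n}(\infty) = \infty $; this is fine even when $ a_n = 0 $ or $ a_n>0 $, since $ A $ fixes $ \infty $. Then apply $ B^{b_n} $, then $ A^{-a_{n-1}} $, and so on. Induction on $ n $ shows that $ W_{r/s}(\infty) $ equals
\begin{displaymath}
  a_0 + \cfrac{1}{b_1 + \cfrac{1}{a_1 + \cdots + \cfrac{1}{b_n + \cfrac{1}{a_n + \infty}}}} .
\end{displaymath}
Here $ a_n + \infty = \infty $, so the innermost term $ 1/(a_n + \infty) $ is $ 0 $.

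This leaves the continued fraction $ [a_0;b_1,a_1,\ldots,b_n] $ of even length, whereas we need $ [a_0;b_1,\ldots,b_n,a_n] $. This bookkeeping is the main obstacle. I will resolve it by first checking the base case $ r/s = a_0 $ with $ n=0 $: here $ W = A^{-a_0} $ sends $ \infty \mapsto \infty \neq a_0 $. So the naive reading is off by one, and the evaluation point must be adjusted.

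The fix is to evaluate instead at a point where the innermost term is correct. Since $ A^{-a_n} $ fixes $ \infty $, the trailing $ A^{-a_n} $ is invisible at $ \infty $, and the true identity compares the two expansions. The odd-length expansion $ [a_0;b_1,\ldots,b_n,a_n] $ ends in $ a_n $ in the convention where the final entry is absorbed. Equivalently, the even-length expansion $ [a_0;b_1,\ldots,a_{n-1}, b_n] $ represents $ r/s $ when the indexing is read so that the last letter is a $ B $-block. I will therefore reconcile the indexing with the paper's convention, including the degenerate cases where $ a_0 $ or $ a_n $ is $ 0 $ and $ r/s=\infty $. I expect the outcome to be the standard fact that the even-length continued fraction of $ r/s $ is read off by the word evaluated at $ \infty $. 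I would check this on small examples such as $ r/s = 1/2 $, and fix the convention there before running the induction.

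Finally, for $ r/s < 0 $: $ -s/r > 0 $, so the positive case gives $ W_{-s/r}(\infty) = -s/r $. Then
\begin{displaymath}
  W_{r/s}(\infty) = S(-s/r) = \frac{r}{s},
\end{displaymath}
which completes the proof.
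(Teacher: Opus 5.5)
Your right-to-left unwinding is correct, and it is in substance the same computation as the paper's. The paper says the Euclidean algorithm puts $(r,s)^t$ in the first column of $W_{r/s}$; you use the M\"obius action instead of matrices. The gap is that the proposal stops exactly where the proof has to happen. With the $a_i,b_i$ taken from the odd-length expansion, as the definition prescribes, your own display gives
\begin{displaymath}
  W_{r/s}(\infty) = A^{-a_0}B^{b_1}\cdots A^{-a_{n-1}}B^{b_n}(\infty) = [a_0;b_1,\ldots,a_{n-1},b_n].
\end{displaymath}
This is the penultimate convergent, not $r/s$. For instance, $5/2=[2;1,1]$ gives $W_{5/2}=A^{-2}BA^{-1}$ and $W_{5/2}(\infty)=A^{-2}(1)=3$, and your base case $W_{a_0}=A^{-a_0}$ fixes $\infty$. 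So with the definition read literally the lemma is false, and no ``reconciling of the indexing'' can prove it. Everything from ``The fix is to evaluate instead\ldots'' to ``\ldots before running the induction'' is a plan to go looking for a convention, not an argument. The assertion that $[a_0;b_1,\ldots,a_{n-1},b_n]$ represents $r/s$ is wrong for those entries.

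You did guess the right resolution: evaluation at $\infty$ reads off the even-length expansion. You should commit to it explicitly. Suppose $r/s=[a_0;b_1,\ldots,a_{n-1},b_n]$ is the \emph{even}-length expansion and $W_{r/s}=A^{-a_0}B^{b_1}\cdots A^{-a_{n-1}}B^{b_n}$; any trailing power of $A$ is invisible at $\infty$. Then your formula with innermost term $1/\infty=0$ is already a complete proof. In matrix form it is the continuant identity
\begin{displaymath}
  \begin{bmatrix} 1 & a \\ 0 & 1 \end{bmatrix}\begin{bmatrix} 1 & 0 \\ b & 1 \end{bmatrix} = \begin{bmatrix} a & 1 \\ 1 & 0 \end{bmatrix}\begin{bmatrix} b & 1 \\ 1 & 0 \end{bmatrix},
\end{displaymath}
which is what the paper's one-line proof tacitly uses. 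The paper's argument is therefore likewise valid only under the even-length reading.

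If instead you keep the odd-length expansion, the true statement is $W_{r/s}(0)=r/s$, because $A^{-a_n}(0)=a_n$ supplies the missing innermost entry. Equivalently, $(r,s)^t$ is the \emph{second} column of $W_{r/s}$.

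In the even-length convention, the case $r/s=0$ needs its own convention, e.g.\ $W_0=S$. This is because $0$ has no even-length expansion with non-negative entries, and $[0]$ gives $W_0=I$. Your reduction of the negative case via $W_{r/s}=SW_{-s/r}$ is correct once the non-negative case is settled.
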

\begin{proof}
  The Euclidean algorithm (e.g.\ Burde and Zieschang~\cite[\S 12.11]{burde3e}) shows that
  \begin{displaymath}
    W_{r/s} = \begin{bmatrix} r & \ast \\ s & \ast \end{bmatrix}.
  \end{displaymath}
  The result then follows from \zcref{lem:isometric_circles}.
\end{proof}
One can view the rational words as modelling a path in the Farey tree from $ \infty $ to $ p/q $. This tree can be seen as the dual graph to a tessellation induced by $ \PSL(2,\IZ) $. Indeed, consider the $3$-fold
cover of $ \IH^2/\PSL(2,\IZ) $ with fundamental domain $F$ shown in \zcref{fig:farey_tricover}. This domain $F$ is exactly the simplex $ (0,1,\infty) $ in the Farey tessellation.
The group generated by $A$ and $B$ does not act to tile the plane with copies of $F$ since there are overlaps, but $ \{W_{r/s} F : r/s \in \IQ \} $ \emph{is} a tiling of the plane.

\begin{figure}
  \centering
  \begin{tikzpicture}[line cap=round,line join=round,>=triangle 45,x=2.0cm,y=2.0cm]
  \begin{axis}[
  x=2.0cm,y=2.0cm,
  axis lines=middle,
  xmin=-1.8,
  xmax=1.8,
  ymin=-0.2,
  ymax=2.2,
  xtick={-1.5,-1.0,...,1.5},
  ytick={-0.0,0.5,...,5.0},axis on top]
  \clip(-1.8,0) rectangle (1.8,5.);
  \draw[fill=red!30,line width=0]
    ([shift={(90:2cm)}]0,0) arc (90:60:2cm)--
    ([shift={(120:2cm)}]1,0) arc (120:90:2cm)--(1,2.5)--(0,2.5)
    -- cycle;
  \draw[fill=blue!30,line width=0]
    ([shift={(90:2cm)}]0,0) arc (90:60:2cm)--
    ([shift={(120:2cm)}]1,0) arc (120:180:2cm)
    -- cycle;
  \draw[fill=blue!30,line width=0]
    ([shift={(0:2cm)}]0,0) arc (0:60:2cm)--
    ([shift={(120:2cm)}]1,0) arc (120:90:2cm)
    -- cycle;
  \draw[fill=green!30,line width=0]
    ([shift={(0:2cm)}]0,0) arc (0:60:2cm)--
    ([shift={(120:2cm)}]1,0) arc (120:180:2cm)--
    ([shift={(180:1cm)}]0.5,0) arc (180:0:1cm);
  \draw [line width=1.pt] (0.,0.) circle (2.0cm);
  \draw [line width=1.pt] (1.,0.) circle (2.0cm);
  \draw [line width=1.pt] (2.,0.) circle (2.0cm);
  \draw [line width=1.pt] (-2.,0.) circle (2.0cm);
  \draw [line width=1.pt] (-1.,0.) circle (2.0cm);
  \draw [line width=1.pt] (0.,-0.2) -- (0.,5.);
  \draw [line width=1.pt] (1.,-0.2) -- (1.,5.);
  \draw [line width=1.pt] (2.,-0.2) -- (2.,5.);
  \draw [line width=1.pt] (-2.,-0.2) -- (-2.,5.);
  \draw [line width=1.pt] (-1.,-0.2) -- (-1.,5.);
  \draw [line width=1.pt] (-0.5,0.) circle (1.0cm);
  \draw [line width=1.pt] (0.5,0.) circle (1.0cm);
  \draw [line width=1.pt] (1.5,0.) circle (1.0cm);
  \draw [line width=1.pt] (-1.5,0.) circle (1.0cm);
  \end{axis}
  \end{tikzpicture}
  \caption{A $2$-simplex in the Farey triangulation is exactly a fundamental domain for a $3$-fold cover of $ \IH^2/\PSL(2,\IZ) $.\label{fig:farey_tricover}}
\end{figure}

Further, if $ \xi \in \IR \setminus \IQ $ then we may find a sequence $ \alpha_i $ of rational numbers such that $ \{ \alpha_{i-2}, \alpha_{i-1}, \alpha_i \} $ are
a Farey triple for all $ i $ (see e.g.~\cite[Proposition~7.1]{elzenaar26}). It follows that the sequence $ W_{\alpha_i}(F) $ converges in the Hausdorff topology
to the point $ \xi $ (since $W_{\alpha_i} (F)$ is a convex hull in $ \IH^2 $ of the three points $\alpha_{i-2}, \alpha_{i-1}, \alpha_i$ and these all converge to $ \xi $).
Thus in some sense the rational words $ W_{r/s} $ converge at the ends of the Farey tree to the constant functions which take irrational values.

\section{$q$-rational numbers}
We now give an alternative definition of the $q$-rational numbers due to Bapat, Becker, and Licata~\cite[Definition~2.6]{bapat22}, which extends that given by Morier-Genoud
and Ovsienko~\cite{moriergenoid22,moriergenoid20} to assign to each $ p/q \in \IQ $ a \emph{pair} of $q$-deformations termed the \df{left} and \df{right} $q$-rationals.
By slight abuse of notation, in this section we will $ W_{r/s} $ for the rational words defined in \zcref{defn:rational_word} but now where $ A $ and $ B $ are the matrices
defined in \eqref{eq:psl2zq}; thus $ W_{r/s} $ can be viewed as a matrix with coefficients in $ \IC\llbracket q^{-1/2} \rrbracket $. The substitution of the variable $q$
with an actual number $ q \in \IC $ will be called a \df{realisation} (of the matrix or $q$-deformed number) given by the value $q$.

\begin{defn}
  For $ q \in \mc{Q}$, the $q$-deformations of the rational number $r/s$ are the two images of $ \Fix A = \{ \infty, 1/(1+q) \} $ under
  the rational word $ W_{r/s}$. The \df{left $q$-rational}, $ [r/s]_q^\flat $, is $ W_{r/s}(1/(1+q)) $. The \df{right $q$-rational}, $ [r/s]_q^\sharp $,
  is $ W_{r/s}(\infty) $ and agrees with the original $q$-rational described in the introduction.
\end{defn}
The limits of the right $q$-rationals are the \df{$q$-irrationals} as defined by Morier-Genoud and Ovsienko. It will follow from \zcref{cor:limset} below
that one does not obtain any `new' $q$-deformed numbers as limits of the left $q$-rationals. (Compare with Theorem~2.12 of~\cite{bapat22}.)
\begin{rem}
  Our definition agrees with Definition~2.6 of \cite{bapat22} up to the substitution $ q \mapsto -q $. This is due to a slightly different normalisation for $ \PSL(2,\IZ)_q $,
  compare \eqref{eq:psl2zq} with \cite[Definition~2.3]{bapat22}.
\end{rem}

When $ q \in (-\infty,-1) $, the two points $ [r/s]_q^\flat $ and $ [r/s]_q^\sharp $ lie on $ \IR $, and Bapat, Becker, and Licata~\cite{bapat22} study
the geodesic in $ \IH^2 $ which joins them. They observe that the deformed Farey tessellation studied in Morier-Genoud and Ovsienko~\cite{moriergenoid20}
converges onto these geodesics. We will now show that observations like this can be proved using the machinery of Kleinian groups, and in the process we
will give a conceptual picture of the realisations of the $q$-reals in $ \IC $ for every $ q \in \mc{Q} $.

The following theorem is really a different view of \zcref{lem:shape_of_q}.
\begin{thm}\label{thm:holomotion}
  Let $ \alpha : [0,1) \to \mc{Q} $ be an analytic path. Let $ \Gamma_t = \PSL(2,\IZ)_{\alpha(t)} $.
  \begin{enumerate}
    \item The oriented quasicircle $ C_t $ preserved by $ \Gamma_t $ moves holomorphically as a subset of the Riemann sphere, and this motion induces
          an injective order-preserving map
          \begin{displaymath}
            \lambda_t : \Lambda(\Gamma_0) \to \Lambda(\Gamma_t).
          \end{displaymath}
    \item If $ \lim_{t\to 1} \alpha(t) = -1 $ (so $ \Gamma_t \to \PSL(2,\IZ) $ in the algebraic convergence topology), then the maps $ \lambda_t $
          converge to a map $ \lambda_1 $ that is injective on all points of $ \Lambda(\Gamma_0) $ except for fixed points of elements $ g A g^{-1} $ ($ g\in \Gamma_0 $):
          that is, for $ x,y \in C_0 $,
          \begin{displaymath}
            \lambda_1(x) = \lambda_1(y) \iff \{x,y\} = \Fix(g A g^{-1}) \;\text{for some}\;g \in \Gamma_0.
          \end{displaymath}
          In addition, $ \lambda_1 $ is still order-preserving on the topological curve $ C_1 = \lim_{t\to 1} C_t $.
  \end{enumerate}
\end{thm}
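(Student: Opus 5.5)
Part (1) follows from the Ahlfors--Bers description of $\mc{Q}$ behind \zcref{lem:shape_of_q}, combined with the $\lambda$-lemma of Ma\~n\'e--Sad--Sullivan (in the Bers--Royden/S\l{}odkowski form). Part (2) comes from identifying $\PSL(2,\IZ)$ as a geometrically finite cusp group on $\partial\mc{Q}$ and tracking which fixed points collide.

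\emph{Step 1: holomorphic motion of fixed points.} Take the Teichm\"uller-space coordinate on the component of $\mc{Q}$ containing $\alpha([0,1))$; it is the holomorphic parameter in \zcref{lem:shape_of_q}, locally a disc. For each loxodromic $g \in \PSL(2,\IZ)$, the attracting fixed point of $\rho_q(g)$ is a holomorphic function of $q$. This follows from the quadratic formula, since the trace stays away from $\pm 2$ because $\rho_q$ is faithful and discrete with no new parabolics on $\mc{Q}$. Distinct fixed points never collide: a collision would make two non-commuting loxodromics share a fixed point, which contradicts discreteness. Thus the dense set $E$ of loxodromic fixed points in $\Lambda(\Gamma_0)$ undergoes a holomorphic motion over $\mc{Q}$ (or over its universal cover, if it is cleaner to work on a simply connected parameter space). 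By the $\lambda$-lemma this motion extends to a holomorphic motion of $\overline{E}=\Lambda(\Gamma_0)$, and by S\l{}odkowski to all of $\hat\IC$. The extension is equivariant, so the image of the $\Gamma_0$-invariant quasicircle $C_0$ is a $\Gamma_t$-invariant quasicircle $C_t$. Restricting it gives $\lambda_t$, which is injective because it is a quasiconformal homeomorphism. Since $\lambda_t$ is a homeomorphism of the sphere carrying the two invariant components of $\hat\IC\setminus C_0$ to those of $\hat\IC\setminus C_t$, it preserves orientation and hence the cyclic order on $C_t$; by continuity in $t$ starting from $t=0$, the orientation does not flip.

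\emph{Step 2: the limit $t\to1$.} As $\alpha(t)\to -1$ the trace of $\rho_q$ applied to a certain element becomes parabolic. I expect this element to be the one whose fixed points are the images of $\Fix A$: in $\Gamma_t$, $A$ is elliptic or loxodromic with fixed points $\infty$ and $1/(1+q)$, which merge at $q=-1$. By \zcref{lem:shape_of_q}, $-1$ is a cusp on the Bers boundary where exactly the curve corresponding to (the conjugacy class of) $A$ is pinched. Pointwise convergence of $\lambda_t$ on $E$ is then clear, since the fixed points are continuous in $q$, including at $q=-1$ for loxodromics of $\PSL(2,\IZ)$ whose traces are nonparabolic. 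To extend this to all of $\Lambda(\Gamma_0)$, I would invoke the convergence of Cannon--Thurston maps under algebraic convergence to geometrically finite limits. The alternative is a direct argument: the holomorphic family of maps $\lambda_t$ extends continuously to the boundary point because the limit group is geometrically finite and the convergence is strong, with no extra parabolics beyond the pinched class. The limit $\lambda_1$ is then monotone, as a limit of order-preserving maps, and is therefore order-preserving on $C_1$ in the weak sense.

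\emph{Step 3: identifying the non-injectivity, which I expect to be the main obstacle.} A monotone limit of circle homeomorphisms identifies only the endpoints of the gaps it collapses. One must show that these gaps are exactly the pairs $\Fix(gAg^{-1})$. The inclusion ``$\Leftarrow$'' is easy: the two fixed points of $\rho_q(gAg^{-1})$ coalesce at $q=-1$. For ``$\Rightarrow$'', I would use the standard description of the limit set of a geometrically finite cusp group: the quotient map from the Fuchsian limit circle to it identifies precisely the endpoints of lifts of the pinched geodesic. I would argue this via a Ford domain, using the isometric circles of the earlier lemma to obtain uniform control away from the cusp. Proving that no further collapse occurs needs geometric finiteness of $\PSL(2,\IZ)$ together with the convergence results of the previous step, and this is where most of the care is required.
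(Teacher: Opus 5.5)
There is a genuine gap in part (2). The implication $\Rightarrow$ is the whole content of the paper's proof of (2) ("we need only check that no other limit points collide"). You flag it as the main obstacle, but you only point to "the standard description of the limit set of a geometrically finite cusp group" and an unspecified Ford-domain estimate.

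That description is also the wrong model here. For $q\in\mc{Q}$ the group $\Gamma_t$ is convex cocompact and virtually free, so $\Lambda(\Gamma_t)$ is a Cantor set lying on $C_t$, not all of $C_t$. For real $q<-1$, $\IH^2/\Gamma_q$ is a disc with cone points of orders $2$ and $3$ and a funnel bounded by the projection of $\Ax(A)$. The complementary arcs of $\Lambda(\Gamma_0)$ in $C_0$ are exactly the arcs bounded by the pairs $\Fix(gAg^{-1})$. As $t\to1$ these gaps close up, giving the round circle $\Lambda(\PSL(2,\IZ))=\IR\union\{\infty\}$. There is no limit circle in which endpoints of lifts of a pinched geodesic are identified, as for a Cannon--Thurston map onto a Bers-boundary cusp group. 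Your remark that a monotone limit "identifies only the endpoints of the gaps it collapses" also assumes what has to be shown: that $\lambda_1$ does not collapse an arc of $C_0$ containing further limit points.

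The paper closes this step geometrically. Suppose $\xi\neq\zeta$ but $\lambda_1(\xi)=\lambda_1(\zeta)$. Join $\xi_t$ and $\zeta_t$ by Poincar\'e geodesics in the two complementary discs $D_t^{\pm}$ of $C_t$; one of them must shrink to a point. If it projects to a closed geodesic on $D_t/\Gamma_t$, its endpoints are the fixed points of a single $g_t$, so $g_1$ is parabolic and hence conjugate to $A$. A geodesic projecting to a non-closed geodesic cannot shrink, because the only degeneration of the disc with two cone points $D_t/\Gamma_t$ is that its boundary length tends to $0$.

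You could instead finish with what you already have. Suppose $\lambda_1$ is constant on $\Lambda(\Gamma_0)\inter J$ for an arc $J$ of $C_0$ containing a third limit point. By density of pairs of loxodromic fixed points, $J$ contains both fixed points of some $g$ not conjugate into $\langle A\rangle$. These converge to the two distinct fixed points of the hyperbolic element $g\in\PSL(2,\IZ)$, a contradiction. So the collapsed arcs are complementary arcs of $\Lambda(\Gamma_0)$, i.e.\ bounded by some $\Fix(gAg^{-1})$.

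Two smaller points:
\begin{itemize}
  \item S\l{}odkowski's extension is not automatically equivariant, so you need the Earle--Kra--Krushkal equivariant version. The paper sidesteps this by using Ahlfors--Bers directly, which yields an equivariant motion of the whole sphere.
  \item Nothing in the hypotheses justifies your assertion in Step 2 that the convergence is strong, since $\alpha$ is an arbitrary analytic path into the cusp.
\end{itemize}
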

\begin{proof}
  Since $ \alpha$ is analytic it can be extended to a holomorphic map on a small neighbourhood of $ [0,1) $ in $ \IC $. The measurable Riemann mapping theorem gives holomorphic
  dependence of the quasiconformal maps conjugating $ \Gamma_t $ to $ \Gamma_0 $ on the parameter set $ \mc{Q} $~\cite{ahlfors60} (see also~\cite[\S 5.7]{astala}).
  Thus we obtain an equivariant holomorphic motion of the curves $ C_t $, which is clearly order-preserving. This shows part (1).

  To see part (2), we need only check that no other limit points of $ \Gamma_t $ collide as $ t \to 1 $. This result dates back to Bers~\cite{bers70}, but we will
  give a conceptual geometric argument. Suppose that there exist $ \xi, \zeta \in \Lambda(\Gamma_0) $ such that $ \xi \neq \zeta $ but $ \lambda_1(\xi) = \lambda_1(\zeta) $.
  Write $ \xi_t \coloneq \lambda_t(\xi) $ and $ \zeta_t \coloneq \lambda_t(\zeta) $. Let $ D^{\pm}_t $ be consistent choices of the two complements of the orientable Jordan curve
  preserved by $ \Gamma_t $, and place the Poincar\'e metric on each~\cite[Theorem 1-9]{ahlfors}.  Consider the respective geodesics $ \gamma_t^{\pm} $ in $ D^{\pm}_t $ which
  join the two boundary points $ \xi_t $ and $ \zeta_t $. As $ t \to 1 $, one of $ \gamma^+_t $ or $ \gamma^-_t $ must degenerate to a point. Let $ \gamma_t $ and $ D_t $ be
  this geodesic and the corresponding disc. We have two cases.
  \begin{itemize}
    \item \textit{The geodesic $ \gamma_t $ descends to a closed geodesic loop on $ D_t/\Gamma_t $.} In this case, $ \gamma_t $ joins two fixed points of the same element $ g_t $ (namely, the primitive element
    of $ \Gamma_t \simeq \pi_1(D_t/\Gamma_t)$ representing the closed loop). This means that as $ t\to1 $, the fixed points of $ g_t $ collide. Hence $ g_1 $ is parabolic. But the only parabolic
    elements of $ \PSL(2,\IZ) $ are conjugates of $A$.
    \item \textit{The geodesic $ \gamma_t $ descends to a non-compact geodesic on $ D_t/\Gamma_t $.} This means that as $ t \to 1 $, there is a non-compact geodesic which is pinched to a point. But
    $D_t/\Gamma_t$ (for $ 0 \leq t < 1$) is a disc with two marked points, and the deformation as $ t\to -1 $ simply corresponds to shrinking the length of the boundary of the disc to $0$ in the induced
    hyperbolic metric; under this process, all non-compact geodesics limit onto non-compact geodesics and not points, giving a contradiction. \qedhere
  \end{itemize}
\end{proof}

As an immediate corollary of part (2) of \zcref{thm:holomotion} combined with the study of the limit set of $\PSL(2,\IZ) $ in the previous section, we see:
\begin{cor}\label{cor:limset}
  Fix $ q \in \mc{Q} $. The limit set of $ \PSL(2,\IZ)_q $ is equal to the set of complex realisations of the $q$-reals given by this particular $q$. \qed
\end{cor}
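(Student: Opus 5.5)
The plan is to prove the two inclusions between $\Lambda(\PSL(2,\IZ)_q)$ and the set of realisations $\{[\xi]_q : \xi \in \IR \union \{\infty\}\}$ at the fixed value $q$. Throughout, a realised $q$-real is understood as the limit in $\hat{\IC}$ of the realised right $q$-rationals $[f_i]_q^\sharp = W_{f_i}(\infty)$ along any sequence $f_i \to \xi$.

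First, the rational case. Since $A$ is parabolic for every $q$ (its fixed points are $\infty$ and $1/(1+q)$, which coincide only if $q = -1$, where it is the parabolic $A$ itself — for $q\in\mc{Q}$ both fixed points should be checked to be limit points: $A$ is elliptic or loxodromic? one checks trace$^2 = -q(1-q^{-1})^2 \cdot$... in fact for $q\in\mc{Q}$, $A$ is loxodromic with fixed points $\infty$, $1/(1+q)$), both fixed points of $A$ lie in $\Lambda(\Gamma_q)$, and hence so do their images $[r/s]_q^\sharp = W_{r/s}(\infty)$ and $[r/s]_q^\flat$ under $W_{r/s}\in\Gamma_q$, since the limit set is invariant.

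Second, I will use \zcref{thm:holomotion}. I choose an analytic path $\alpha$ in $\mc{Q}\union\{-1\}$ from $q$ to $-1$, with $\alpha(t)\in\mc{Q}$ for $t<1$; this is possible since $-1$ lies in the closure of each component. This gives an equivariant holomorphic motion, and hence a homeomorphism $\lambda: \Lambda(\Gamma_q)\to\Lambda(\Gamma_t)$ for $t<1$. In the limit it gives a continuous surjection $\lambda_1$ from $\Lambda(\Gamma_q)$ onto $\Lambda(\PSL(2,\IZ)) = \IR\union\{\infty\}$. I reverse the roles so that the basepoint is $\Gamma_q$; equivariance means that $\lambda_1$ intertwines $W_{r/s}$ at parameter $q$ with $W_{r/s}$ at $-1$.

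By \zcref{thm:holomotion}(2), the fibres of $\lambda_1$ are singletons except over cusps $g(\infty)$, where the fibre is the pair $\Fix(gAg^{-1})$. Under equivariance, the fibre over $r/s = W_{r/s}(\infty)$ is $\{[r/s]_q^\sharp, [r/s]_q^\flat\}$. Hence there is a well-defined inverse map $\mu:\IR\union\{\infty\}\to\Lambda(\Gamma_q)$, taking $\xi$ to its unique preimage at irrationals and to the $\sharp$ point at rationals.

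The key step is to show that for irrational $\xi$, $\mu(\xi) = \lim W_{f_i}(\infty)$ whenever $f_i\to\xi$. Take any subsequential limit $z$ of $W_{f_i}(\infty)$. Since $\Lambda(\Gamma_q)$ is compact, $z$ lies in $\Lambda(\Gamma_q)$, and by continuity $\lambda_1(z) = \lim f_i = \xi$. As the fibre over $\xi$ is a singleton, $z = \mu(\xi)$, so the full limit exists and equals $\mu(\xi)$. This shows both that the realised $q$-real exists and that it lies in the limit set. Conversely, every point of $\Lambda(\Gamma_q)$ is $\mu(\xi)$ for some $\xi$, or is a $\flat$ point. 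A $\flat$ point is itself a limit of $\sharp$ points: take $f_i\to r/s$ from the appropriate side and use the order-preserving property on $C_1$ to see that one side accumulates on $[r/s]^\flat_q$. This gives equality of the two sets.

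The main obstacle is justifying that $\lambda_1$ is a continuous surjection compatible with the holomorphic motion at the boundary parameter $-1$, that is, uniform convergence of $\lambda_t$ as $t\to1$. I would try first to deduce this from the extension of the holomorphic motion to the closure (Słodkowski/$\lambda$-lemma equicontinuity) combined with the density of fixed points. A secondary point needing care is the one-sided limit at cusps producing the $\flat$ point.
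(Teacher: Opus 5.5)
Your argument is the paper's argument written out. The paper only says the corollary is immediate from part~(2) of \zcref{thm:holomotion} together with $\Lambda(\PSL(2,\IZ)) = \IR\union\{\infty\}$. Your steps are exactly what that deduction amounts to: equivariance identifies the fibre of $\lambda_1$ over $r/s$ with $\{[r/s]^\sharp_q,[r/s]^\flat_q\}$; compactness plus singleton fibres forces $W_{f_i}(\infty)\to\mu(\xi)$ for irrational $\xi$; and the fibre description accounts for every limit point. All of this is correct once $\lambda_1$ is known to be a continuous surjection onto $\IR\union\{\infty\}$.

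You flag that step as the main obstacle. It needs no new analysis, but the route you suggest would not supply it. The quasisymmetry constants from the $\lambda$-lemma (and from S{\l}odkowski's extension) depend on the hyperbolic distance from $\alpha(0)$ to $\alpha(t)$ in $\mc{Q}$, which tends to infinity as $\alpha(t)\to -1\in\partial\mc{Q}$. In fact no uniform bound can exist, since it would force $\lambda_1$ to be injective, contradicting the collapse of $\Fix(gAg^{-1})$.

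Instead, use the order-preserving clause of \zcref{thm:holomotion}(2). By equivariance $\lambda_1([r/s]^\sharp_q)=r/s$, so the image of the weakly monotone map $\lambda_1$ on the closed set $\Lambda(\Gamma_q)\subset C_q$ contains the dense set $\IQ\union\{\infty\}$. Such a map cannot jump. Suppose $x_n\to x$ monotonically in $\Lambda(\Gamma_q)$ with $\lambda_1(x_n)\to L\neq\lambda_1(x)$. Monotonicity shows that no point of $\Lambda(\Gamma_q)$ maps into the open arc between $L$ and $\lambda_1(x)$, yet that arc contains a rational. Hence $\lambda_1$ is continuous, and its image is compact and dense, i.e.\ all of $\IR\union\{\infty\}$.

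Three smaller points.
\begin{itemize}
\item Delete the false start in your first paragraph. For $q\in\mc{Q}$ one has $\abs{q}\neq 1$, so the eigenvalues $(-q)^{\pm 1/2}$ of $A$ have modulus $\neq 1$ and $A$ is loxodromic.
\item The $\flat$ points are not of the form $[\xi]_q$ in the strict sense. The statement is therefore true only under your reading of `realisations' as all limits of realised right $q$-rationals.
\item Under that reading you can also handle the $\flat$ points without the one-sided argument. The orbit $\Gamma_q\cdot\infty=\{[r/s]^\sharp_q\}$ is dense in $\Lambda(\Gamma_q)$, because the action of a non-elementary group on its limit set is minimal.
\end{itemize}
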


\begin{rem}[Jones polynomials of rational links]\label{rem:jones_poly}
  Via \zcref{cor:limset}, Theorem~A.3 of Bapat, Becker, and Licata~\cite{bapat22} gives an interesting new interpretation of the limit set $ \Lambda(\PSL(2,\IZ)_q) $: for $ r/s \in (1,\infty) $,
  let $ V_{r/s}(q) $ denote the Jones polynomial of the $2$-bridge link of slope $ r/s $ and let $ \abs{V_{r/s}(q)} $ denote the polynomial obtained by making each coefficient positive.
  Then for each $q$, $ \abs{V_{r/s}(-q)} $ is dense in one of the two halves of the limit set of $ \PSL(2,\IZ)_q $ bounded strictly between the repelling fixed points of $A$ and $ W_{1/1}A W_{1/1}^{-1} $.
  (When $ \abs{q} > 1 $, the repelling fixed points of $ A$ and $W_{1/1}A W_{1/1}^{-1}$ are $ \infty $ and $1$, respectively, c.f.\ \zcref{tab:eigensystems}.)
\end{rem}

\begin{table}
  \centering
  \caption{Cusp points of denominator at most $8$ on the boundary of the component of $ \mc{Q} $ that lies inside the unit circle. The slopes $ 0/1 $ and $ 1/1 $ correspond to
           the two halves of the Fuchsian locus of the $(2,3)$-Riley slice and are folded together by the $2:1$ map from the Riley slice onto each component of $ \mc{Q} $.\label{tab:cusps}}
  \begin{tabular}{llcll}\toprule
    $r/s$ & cusp &\hspace{1em}&          $r/s$ & cusp\\\midrule
    $0/1$ & $-1.0 + 0.0i$ & &            $1/7$ & $-0.232347 + 0.589797i$\\
    $1/1$ & $-1.0 + 0.0i$ & &            $2/7$ & $0.220025 + 0.507415i$\\
    $1/2$ & $0.381966 + 0.0i$ & &        $3/7$ & $0.477012 + 0.10653i$\\
    $1/3$ & $0.340977 + 0.402816i$ & &   $4/7$ & $0.477012 - 0.10653i$\\
    $2/3$ & $0.340977 - 0.402816i$ & &   $5/7$ & $0.220025 - 0.507415i$\\
    $1/4$ & $0.109976 + 0.519497i$ & &   $6/7$ & $-0.232347 - 0.589797i$\\
    $3/4$ & $0.109976 - 0.519497i$ & &   $1/8$ & $-0.299813 + 0.577406i$\\
    $1/5$ & $-0.0348945 + 0.583025i$ & & $3/8$ & $0.431001 + 0.295843i$\\
    $2/5$ & $0.467623 + 0.214239i$ & &   $5/8$ & $0.431001 - 0.295843i$\\
    $3/5$ & $0.467623 - 0.214239i$ & &   $7/8$ & $-0.299813 - 0.577406i$\\
    $4/5$ & $-0.0348945 - 0.583025i$ & & \\
    $1/6$ & $-0.148582 + 0.578415i$ & & \\
    $5/6$ & $-0.148582 - 0.578415i$ & & \\\bottomrule
  \end{tabular}
\end{table}

\begin{table}
  \centering
  \caption{Eigensystems (hence fixed point dynamics) for $ A $ and $B $.\label{tab:eigensystems}}
  \begin{tabular}{llll}\toprule
    & eigenvalue & eigenvector & behaviour when $ \abs{q} < 1 $\\\midrule
    $A$ & $ (-q)^{1/2} $ & $ (1, q+1)^t $ & repelling\\
     & $ (-q)^{-1/2} $ & $ (1, 0)^t $ & attracting\\
    $B$ & $ (-q)^{1/2} $ & $ (1+q^{-1}, 1)^t $ & repelling\\
     & $ (-q)^{-1/2} $ & $ (0,1)^t $ & attracting\\\bottomrule
  \end{tabular}
\end{table}

\begin{rem}[Behaviour of complex realisations for $ q \in \partial \mc{Q} $]
  The boundary $ \partial \mc{Q} $ can be divided into three sets of points:
  \begin{itemize}
    \item \textit{Cusp points}. These are points $ q $ such that if $ \alpha : [0,1) \to \mc{Q} $ satisfies $ \lim_{t\to 1} \alpha(t) = q $ then
          the conclusion of (2) of \zcref{thm:holomotion} holds but with `fixed points of a conjugate of $A$' replaced with `fixed points of a conjugate of a Farey word in $BA$ and $BAB$'
          (`Farey word' is defined in~\cite{elzenaar26}). The cusp points are dense on the boundary of $ \mc{Q} $. For each cusp point, the map $ \lambda_1 $ sends $ \Lambda(\Gamma(0)) $
          to the complement of a circle packing; i.e.\ the set of realisations of $q$-reals when $q$ is a cusp point is the complement of a circle packing. The value for $q$
          chosen to draw \zcref{fig:quasifuchsian} is slightly away from the $1/3$ cusp point. In \zcref{tab:cusps} we give decimal approximations for the cusp points of slopes with small denominator.
    \item \textit{Degenerate points}. If $ q \in \partial \mc{Q} $ is not a cusp point, then the limit of the quasicircles $ C_t $ as $ t \to 1 $ is a space-filling curve. There are no
          collisions of fixed points, and the map $ \lambda_1 $ is injective. For such $q$, the set of realised $q$-reals is equal to $ \hat{\IC} $. These points are nonmeagre
          in the boundary $ \partial \mc{Q} $ (in the sense of Baire category).
    \item \textit{Punctures}. These are the points $0$ and $ \infty $ and they arise as projections of the fixed point of the parabolic element $ \omega \in \Mod(S_{0,4}) $ such
          that $ \mc{R}_{2,3} = \Teich(S_{0,4})/\langle \omega \rangle $~\cite[\S 4.3]{ems22M}. Since $ \mc{Q} $ is invariant under inversion in the unit circle it suffices to consider
          the groups arising as $ q \to 0 $. Computer experiment suggests that $ \Lambda(\PSL(2,\IZ)_q) \to \{0,1,\infty\} $ (as $ q\to 0$) in the Hausdorff
          metric on compact subsets of the sphere. Here is a sketch of a proof: by our arguments above, for each $ q \in \mc{Q} $ the fixed points of $ W_{r/s} A W_{r/s}^{-1} $
          are dense in the limit set of $ \PSL(2,\IZ)_q $. These are the images of $ \Fix(A) $ under $ W_{r/s} $. Each $ W_{r/s} $ is a product of powers of $ A^{-1} $ and $B$. As $ q \to 0 $,
          both these maps converge uniformly on compact subsets away from their repelling fixed point to the constant map onto their attracting fixed point. Referring to \zcref{tab:eigensystems}, the attracting fixed point of
          $A^{-1}$ is $ 1/(1+q) \to 1 $ and the attracting fixed point of $ B$ is $ 0 $. Thus when $ W_{r/s} \neq 1 $, the fixed points of $W_{r/s} A W_{r/s}^{-1}$ converge to $ \{0,1\} $ as $ q \to \infty $.
          When $ W_{r/s} = 1 $, then the fixed points are $ \infty $ and $ 1/(1+q) \to 1 $. By density, the entire limit set of $ \PSL(2,\IZ)_q $ converges to $ \{0,1,\infty\} $. As a byproduct
          of the argument we see that the only $q$-reals which converge to $ \infty $ as $q\to0$ are $ [\infty]_q $ and the $q$-realisations of the negative reals, since these are the numbers
          for which the corresponding word has leftmost letter $ A^{+1} $.
  \end{itemize}
  See Bers~\cite{bers70}.
\end{rem}

\begin{rem}[Realisations form a quasicircle]
  For $ q \in \mc{Q} $, we see that the set of realisations of $q$-reals lies on a quasicircle (and has measure $0$). This has many analytic consequences---see the survey by Gehring
  and Hag~\cite{gehring}. For instance, the $q$-reals satisfy the Ahfors $3$-point criterion: there exists a constant $ a \geq 1 $ such that for any two $q$-reals
  $ \xi, \zeta \in \IC $, $ \abs{\xi - z} \leq a \abs{\xi-\zeta} $ for any point $ z $ in the arc of $ C \setminus \{\xi, \zeta\} $ with minimum diameter (where $C$ is the Jordan
  curve containing the set of realisations of $q$-reals for the particular $q$). Other possible applications of this point of view include the computation of the Hausdorff
  dimension of the set of realisations of $q$-rationals, see Bishop and Jones~\cite{bishop97} and references therein.
\end{rem}

\begin{rem}[Behaviour of complex realisations for $ q \in \IC \setminus \overline{\mc{Q}} $]
  When $ \PSL(2,\IZ)_q $ is indiscrete, one obtains essentially no information. However, there are many values of $q \in \IC \setminus \overline{\mc{Q}}$ for which $\PSL(2,\IZ)_q$
  is discrete and so the study of `realisations of $q$-reals' can still be done analytically. In \cite[Remark~6.1]{ems24bd} the values of $ q $ on the unit circle for
  which $\PSL(2,\IZ)_q$ is discrete are studied. These groups are Fuchsian triangle groups and so have limit sets behaving in a very similar way to $ \PSL(2,\IZ) $.
  The other values of $q \in \IC \setminus \overline{\mc{Q}}$ corresponding to discrete groups give so-called `generalised Heckoid groups' and are in the process of classification
  by Chesebro, Martin, and Schillewaert~\cite{cms24}.
\end{rem}

Here is a simple geometric corollary. Observe first that if $ q \in \IR_{<0} \inter \mc{Q} $ then $ \PSL(2,\IZ)_q $ is Fuchsian. (The condition that $q$ is negative is to ensure that
the square roots in \eqref{eq:psl2zq} that normalise the matrices to have determinant $1$ are real-valued. It is a sufficient condition but not a necessary one---$ \PSL(2,\IZ)_q $ is
also conjugate to a subgroup of $ \PSL(2,\IR) $ for any $q$ on the unit circle.)
\begin{cor}\label{cor:boundary_hyp}
  For $ q \in \IR $, $ q < -1 $, for each $ r/s \in \IQ $ let $ C[r/s]_q $ denote the circle which is orthogonal to $ \IR $ and passes through $ [r/s]^\sharp_q $ and $ [r/s]^\flat_q $
  Then the circles $ C[r/s]_q $ (the set of `$q$-discs' studied by Jouteur, Paris-Romaskevich, and Thomas~\cite{jouteur26}) are disjoint and no $q$-real lies in their interior.
\end{cor}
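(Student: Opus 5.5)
We treat $q<-1$ real, where $\Gamma_q=\PSL(2,\IZ)_q$ is Fuchsian. The plan is to show that each circle $C[r/s]_q$ bounds a hyperbolic half-plane cut off by an axis of a parabolic-free element, and then use disjointness of such half-planes in the Fuchsian picture.

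First we check that, for $q\in\IR_{<0}\inter\mc{Q}$ with $q<-1$, the matrices in \eqref{eq:psl2zq} are real. Hence $\Gamma_q$ preserves $\hat{\IR}$ and is Fuchsian, and the quasicircle $C_t$ of \zcref{thm:holomotion} is $\hat{\IR}$ itself. The limit set $\Lambda(\Gamma_q)$ is therefore a proper closed subset of $\hat{\IR}$. By \zcref{cor:limset}, it is exactly the set of realised $q$-reals.

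Next, $A$ has fixed points $\infty$ and $1/(1+q)$, and for $q<-1$ it is hyperbolic, with eigenvalues $(-q)^{\pm1/2}\neq\pm1$. Its axis in $\IH^2$ is the vertical line through $1/(1+q)$. For the conjugate $W_{r/s}AW_{r/s}^{-1}$ the fixed points are $[r/s]^\sharp_q$ and $[r/s]^\flat_q$, so $C[r/s]_q\inter\IH^2$ is the axis $W_{r/s}(\Ax A)$. The key claim is that $\Ax A$ bounds a half-plane $H$ whose boundary interval $I\subset\hat{\IR}$ lies in $\Omega(\Gamma_q)$.

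To prove the claim, we use the description in \zcref{lem:shape_of_q} and the proof of \zcref{thm:holomotion}. On the Fuchsian locus, $\IH^2/\Gamma_q$ is a disc with cone points of orders $2$ and $3$, whose boundary is a closed geodesic (a funnel boundary) represented by the conjugacy class of $A$, the element that becomes parabolic as $q\to-1$. Consequently one side of $\Ax A$ projects into the funnel, which contains no limit points. This gives the interval $I$, namely one of the two arcs of $\hat{\IR}\setminus\Fix A$, free of $\Lambda$.

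To see that $I$ is genuinely in $\Omega$, we can alternatively argue directly. In $\IH^3/\Gamma_q$, the arc $I$ projects to an annulus-end region, and the convex core boundary is exactly the geodesic $\Ax A$ projected. Its lifts $g(\Ax A)$ are pairwise disjoint because the boundary of the convex hull of a Fuchsian limit set consists of disjoint geodesics, namely the lifts of the simple closed boundary geodesic. We then apply $W_{r/s}$. The interior of $C[r/s]_q$ is the disc whose intersection with $\hat{\IR}$ is $W_{r/s}(I)$, up to choosing the correct side. That side should be the one not containing the rest of $\Lambda$, which is consistent because $W_{r/s}(I)\subset\Omega$. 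The interior therefore contains no $q$-real.

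For disjointness, distinct $r/s$ give distinct cosets $W_{r/s}\langle A\rangle$, since the $W_{r/s}F$ tile the plane. Hence the geodesics $W_{r/s}(\Ax A)$ are distinct lifts of the simple boundary geodesic and do not cross. Two such lifts cannot be nested either, because each bounds a half-plane disjoint from the convex hull of $\Lambda$. The closed discs can touch only at a common endpoint, which would be a shared fixed point of two hyperbolic elements in a discrete group. By discreteness such elements would share their axis, which is impossible for distinct lifts, so the discs are disjoint.

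The main obstacle is the identification of $A$ as the boundary (funnel) geodesic on the Fuchsian locus, with the correct side of its axis being limit-set-free. I would verify this first by a direct ping-pong or Ford domain computation at one explicit value such as $q=-2$, and then propagate the statement along $\IR_{<-1}\inter\mc{Q}$ using the holomotion of \zcref{thm:holomotion}, which preserves order on $\hat{\IR}$.
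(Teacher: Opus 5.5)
Your proof is correct, but it takes a different route from the paper. The paper argues by deformation. At $q=-1$ each pair $[r/s]^\sharp_q,[r/s]^\flat_q$ is a single cusp, and by \zcref{thm:holomotion} the limit set pulls apart along $\IR$ in an order-preserving way in which only fixed-point pairs of conjugates of $A$ separate, so each such pair leaves a gap containing no $q$-real. You instead work with the single Fuchsian group $\Gamma_q$. Once $A$ is the funnel-boundary element, each $W_{r/s}(\Ax A)$ is a boundary geodesic of $\hconv\Lambda(\Gamma_q)$ that cuts off a half-plane whose ideal boundary lies in $\Omega(\Gamma_q)$. Disjointness then follows from simplicity of the boundary geodesic, and discreteness rules out tangencies.

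What each route buys:
\begin{itemize}
\item Yours needs no limiting argument in $q$, beyond identifying $q$-reals with limit points.
\item Yours proves the stronger statement that the closed discs are pairwise disjoint.
\item Yours shows the $q$-discs are exactly the complementary half-planes of the convex hull, which is the picture the paper itself uses later for \zcref{cor:shape_of_qfareytess}.
\item The paper's route in turn explains why precisely these pairs separate: they are what collide at $q=-1$.
\end{itemize}

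The step you flag as the main obstacle needs no computation at $q=-2$. Write $A=(AB)^{-1}(ABA)$: this is the product of the order-$3$ and order-$2$ generators, so it represents the boundary of the disc with two cone points. It is primitive because it corresponds to $z\mapsto z-1$ in $\PSL(2,\IZ)$.

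Two small repairs are needed.
\begin{itemize}
\item There is no ``choice of side'' for the interior of $C[r/s]_q$. Since $\infty\in\Fix A\subset\Lambda(\Gamma_q)$, the $\Omega$-arc $W_{r/s}(I)$ cannot contain $\infty$, so it is automatically the bounded interval cut out by the circle.
\item Distinctness of the cosets $W_{r/s}\langle A\rangle$ does not follow from the tiling: $WF\neq WAF$ even though $W$ and $WA$ lie in the same coset. It follows instead from $W_{r/s}(\infty)=r/s$ at $q=-1$, together with $\Stab_{\PSL(2,\IZ)}(\infty)=\langle A\rangle$ and faithfulness of $\rho_q$. Passing from distinct cosets to distinct axes also needs $\Stab_{\Gamma_q}(\Ax A)=\langle A\rangle$. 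This holds because an element swapping the endpoints of $\Ax A$ would exchange $I$ with the complementary arc, which contains limit points.
\end{itemize}
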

\begin{proof}
  The result is trivial for $ \PSL(2,\IZ) $. By \zcref{thm:holomotion}, as we deform $ q $ along $ \IR $ away from $ -1 $, the limit set of $ \PSL(2,\IZ)_q $ is exactly the set
  of $q$-reals and they `pull apart' from each other in an order-preserving way leaving gaps between each pair $ [r/s]^\sharp_q $ and $ [r/s]^\sharp_q $
  (since these are just the fixed points of $ W_{r/s} A W_{r/s}^{-1} $).
\end{proof}

While we are discussing the Fuchsian setting, we observe that the Ford domains of $ \PSL(2,\IZ)_q $ are relatively easy to compute in this case.
In fact, fundamental domains for all Fuchsian groups on two elliptic or parabolic generators (as well as inequalities on the traces of the generators
of arbitrary pairs of parabolic or elliptic matrices with real coefficients which characterise when they generate a Fuchsian group) were given by
Knapp~\cite{knapp68} and Rosenberger~\cite{rosenberger86}. These domains were rediscovered in the context of $q$-rationals by Jouteur, Paris-Romaskevich,
and Thomas~\cite[Proposition~A]{jouteur26}.

\begin{figure}
  \centering
  \includegraphics[width=.7\textwidth]{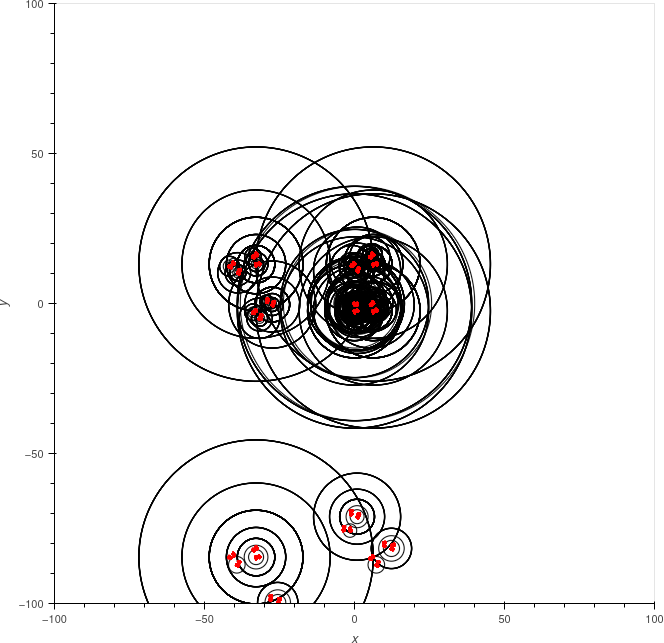}
  \caption{Isometric circles for words of length $ \leq 8 $ in $ \PSL(2,\IZ)_q $ where $ q = 0.4i $.\label{fig:quasifuchsian_ford}}
\end{figure}

\begin{rem}[Ford domains for arbitrary $ q \in \mc{Q} $]
  One can attempt to study fundamental domains of $ \PSL(2,\IZ)_q $ for $ q \in \mc{Q} \setminus \IR $. In general, one can always find finite-sided fundamental domains for $ \PSL(2,\IQ)_q $.
  However, as $q$ moves closer and closer to $ \partial \mc{Q} $ away from $ \IR $ the number of faces will increase without bound. One can study the structure of the Ford domains via
  essentially the same machinery as that described by Akiyoshi, Sakuma, Wada, and Yamashita~\cite{akiyoshi} following J\o{}rgensen~\cite{jorgensen03} (there is a cell
  decomposition of $ \mc{Q} $ indexed by the Farey tree such that (i) if $ q $ lies in a given cell then the Ford domain arises from the isometric circles of some fixed set of elements,
  and (ii) if $q$ moves from one cell to a neighbouring cell then the isometric circles of elements corresponding to that edge need to be added or removed). One can also produce Ford domains for fixed $q$
  computationally as studied by Riley~\cite{riley83}. In \zcref{fig:quasifuchsian_ford} we show the isometric circles for one such group.
\end{rem}

In this direction, we can also reprove Proposition~2.14 of Bapat, Becker, and Licata~\cite{bapat22} and prove Conjecture 2.15 of that paper. We must first
recall the following definition from Morier-Genoid and Ovsienko~\cite[\S 2]{moriergenoid20} and Bapat, Becker, and Licata~\cite[\S 2.2]{bapat22}.
\begin{defn}\label{defn:farey_tess}
  The \df{$q$-deformed Farey tessellation} is the simplicial complex in $ \IH^2 $ obtained from the images of the hyperbolic triangle
  \begin{displaymath}
    \hconv \{ 0/1, 1/0, 1/1 \}
  \end{displaymath}
  under the rational words.
  The resulting triangles come equipped with an edge weighting and a recursion akin to Farey addition which depends on this weight~\cite[Figure~2]{bapat22}
  but we do not require this additional structure here. The vertices of the $q$-deformed Farey tessellation are the right $q$-deformed rationals.
\end{defn}
Our definition appears slightly different to that in \cite{bapat22}, as they define the tessellation via a recursion on the edges coming from the Farey rule.
This is equivalent by the discussion in the previous section: the actions of $A^{-1}$ and $B$ on a triangle produce the two neighbouring triangles whose vertices are given
by Farey addition, and then $ S = ABA $ reflects the whole tessellation to give the negative $q$-deformed triangles.

\begin{cor}\label{cor:shape_of_qfareytess}
  For $ q \in \IR $, $ q < -1 $, let $ L_q \subset \IH^2 $ be the proper subset of $ \IH^2 $ covered by the $q$-deformed Farey tessellation. Then:
  \begin{enumerate}
    \item The set $ L_q $ is homeomorphic to an open disc, and its closure is homeomorphic to a closed disc.
    \item The boundary $ \partial \overline{L_q} $ is the union of the $q$-deformed rational curves and the $q$-deformed irrationals.
    \item There is a homeomorphism $ \partial \overline{L_q} \to \IR \union \{\infty\} $ which restricts to the identity on the left and right $q$-rationals and the $q$-deformed irrationals.
    \item The set of $q$-deformed rational curves is dense in $ \partial \overline{L_q} $.
  \end{enumerate}
\end{cor}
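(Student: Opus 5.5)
We plan to deduce the statement from the Fuchsian picture for $q \in \IR_{<-1} \inter \mc{Q}$, using \zcref{thm:holomotion} along the real path from $-1$ and the geometry of the ordinary set $\Omega(\PSL(2,\IZ)_q)$. First we check that $(-\infty,-1) \subset \mc{Q}$; this follows from the shape described in \zcref{lem:shape_of_q}, together with the inversion symmetry exchanging the two components, or directly from Knapp's trace criteria. For such $q$ the group $\Gamma_q = \PSL(2,\IZ)_q$ is Fuchsian and of the second kind. Its limit set $\Lambda_q \subset \IR \union \{\infty\}$ is a Cantor set. By \zcref{cor:limset}, $\Lambda_q$ is exactly the set of $q$-reals. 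Its complementary intervals in $\IR \union \{\infty\}$ are precisely the gaps $(\,[r/s]^\sharp_q, [r/s]^\flat_q\,)$ between the two fixed points of the hyperbolic elements $W_{r/s} A W_{r/s}^{-1}$. To see this, note that the proof of \zcref{cor:boundary_hyp} identifies these gaps, and the quotient of $\IH^2$ is a disc with two cone points whose single boundary geodesic is the axis of $A$.

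The main geometric claim is that $L_q$ is the interior of the Nielsen convex core, $\hconv(\Lambda_q)$. Its boundary consists of the geodesics $C[r/s]_q \inter \IH^2$ joining $[r/s]^\sharp_q$ to $[r/s]^\flat_q$; these are the $q$-deformed rational curves. We would prove the inclusion $L_q \subset \hconv(\Lambda_q)$ by observing that every triangle $W_{r/s}\hconv\{0,\infty,1\}$ has vertices that are right $q$-rationals, hence limit points, so it lies in the convex hull. For the reverse inclusion we would argue that the triangles tile this hull. The base triangle $T_0$ together with its images under $A^{-1}$, $B$ and $S$ forms a locally finite, edge-to-edge arrangement, because the $W_{r/s}$ tile as at $q=-1$ (cf.\ the tiling $\{W_{r/s}F\}$ in the previous section). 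Since the arrangement is $\Gamma_q$-equivariant in the sense of descending to a finite triangulation of the convex core of the orbifold $\hconv(\Lambda_q)/\Gamma_q$, the union is open and closed in the interior of the hull, and therefore fills it. Concretely, the quotient of $\hconv(\Lambda_q)$ is a compact-with-cusp orbifold with geodesic boundary, and $T_0$ projects onto a fundamental region for a finite cover of it. This tiling step is where we expect the main difficulty, and we would handle it by deforming continuously from $q=-1$. Along the path, \zcref{thm:holomotion}(1) keeps the vertex order fixed, so adjacent triangles can never overlap.

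Once $L_q = \Interior\,\hconv(\Lambda_q)$ is established, the four parts follow.

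(1) The interior of the convex hull of a closed set of at least three points on the circle is an open convex set, hence an open disc. Its closure in $\IH^2 \union \partial\IH^2$ is a compact convex set, hence a closed disc.

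(2) The boundary of the closure consists of the boundary geodesics over the gaps, namely the rational curves, together with the points of $\Lambda_q$ that are not gap endpoints. By \zcref{cor:limset} and \zcref{thm:holomotion}(2), these remaining points are exactly the $q$-irrationals.

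(3) To build the homeomorphism, send each rational curve $C[r/s]_q$ linearly in arc length (suitably reparametrised) to the single point $r/s$, and send each $q$-irrational to its preimage under $\lambda_1 \circ \lambda_q^{-1}$. This quotient map is monotone and collapses each gap. To obtain a genuine homeomorphism onto $\IR \union \{\infty\}$ we compose with the inverse of a Minkowski-type order isomorphism that opens each rational $r/s$ into an interval. Equivalently, $\partial\overline{L_q}$ and $\IR \union \{\infty\}$ are both circles. The ordered countable family of rational arcs can be matched to a dense ordered family of closed intervals in $\IR \union \{\infty\}$ whose endpoints are exactly the left and right $q$-rationals, and extending by order completeness gives the homeomorphism. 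It fixes the marked points by construction, in the sense that each point corresponds to its own label.

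(4) Density of the rational curves follows because gap endpoints, which are fixed points of conjugates of $A$, are dense in $\Lambda_q$.
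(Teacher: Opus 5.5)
Your target $L_q=\Interior\hconv(\Lambda_q)$ is the right one; it is the identification the paper makes. The inclusion $L_q\subset\hconv(\Lambda_q)$ is fine. Order preservation of the vertices does show that distinct triangles have disjoint interiors and that the two triangles on an edge lie on opposite sides. Strictly, comparing with $q=-1$ uses part~(2) of \zcref{thm:holomotion} rather than part~(1), since $-1\notin\mc{Q}$. Items (1), (2) and (4) then follow once the identification is known.

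The gap is the covering step. Order preservation only makes $L_q$ open; closedness in $\Interior\hconv(\Lambda_q)$ needs local finiteness, and your reason for it rests on a false picture.
\begin{itemize}
\item For $q<-1$ the generator $A$ is hyperbolic and $\Gamma_q$ has no parabolics. So $\hconv(\Lambda_q)/\Gamma_q$ is a \emph{compact} disc with cone points of orders $2,3$ and one geodesic boundary curve: there is no cusp.
\item The vertices of $T_0$ are endpoints of boundary axes: $\infty$ is the repelling fixed point of $A$, and $0=S(\infty)$, $1=B(\infty)$. Hence in the quotient the edges of $T_0$ spiral onto the boundary geodesic, and the triangles do not descend to a finite triangulation of the convex core.
\item Poincar\'e's polygon theorem does not apply to $T_0$ with its half-turn side pairings. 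The cycle transformation at each ideal vertex is a conjugate of $A^{\pm 3}$, which is hyperbolic rather than parabolic.
\item The family $\{gT_0\}$ really is not locally finite in $\IH^2$: the triangles $A^nT_0$ accumulate on $\Ax(A)$.
\end{itemize}
Nothing in your argument excludes such accumulation at an interior point of the hull, and that is exactly the content of the step.

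What is missing is control over where triangles accumulate, which is the key step of the paper's proof. Since $1/(1+q)$ is the attracting fixed point of $A$, the triangles $A^nT_0$ converge onto $\Ax(A)$. Every boundary axis is some $W_{r/s}\Ax(A)$, so triangles accumulate only onto boundary axes.

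In your framework this becomes the local finiteness you need. Cut $T_0$ into a compact piece $K_0$ and three spikes at its ideal vertices. Each spike lies in an $\varepsilon$-neighbourhood of the boundary axis ending at that vertex, because geodesics with a common ideal endpoint are asymptotic. A compact $K\subset\Interior\hconv(\Lambda_q)$ lies at positive distance from the locally finite family $\Gamma_q\cdot\Ax(A)$. So for small $\varepsilon$ it meets no translate of a spike, and by proper discontinuity it meets only finitely many $gK_0$.

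With this added, your open-and-closed argument goes through and gives $L_q=\Interior\hconv(\Lambda_q)$ exactly, which is what the first half of (1) needs. The paper instead identifies only $\overline{L_q}$ with the hull, by an accumulation-plus-convexity argument.

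Separately, in (3) the collapse-and-reopen detour is unnecessary. Take the identity on $\Lambda_q$, together with a homeomorphism of each rational curve (with its endpoints) onto the closed gap it spans, fixing the endpoints. That is already the required homeomorphism.
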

\begin{proof}
  When $q$ is in the range $ (-\infty,-1) $, it lies in the interior of $ \mc{Q} $ and the group $ \PSL(2,\IZ)_q$ is a Fuchsian group acting separately on the upper and lower halfplanes,
  projecting each down to a disc with two marked points of orders $2$ and $3$. Further, the element $A$ is hyperbolic with some axis $\Ax(A)$ in $ \IH^2 $, and in fact is a primitive
  boundary hyperbolic as defined in~\cite[\S 10.3]{beardon}. Consider the fundamental quadrilateral $Q = \hconv\{0,1,\infty\}$ for the Farey
  tessellation defined in \zcref{defn:farey_tess}. The two ends of $ \Ax(A) $ are $ \infty $ and $ 1/(1+q) $, and the latter is an attracting fixed point (\zcref{tab:eigensystems}). Thus the
  quadrilaterals $ A^n Q $ limit onto $ \Ax(A) $, so $\overline{L_q}$ contains $ \Ax(A) $.

  Let $ gAg^{-1} $ be some other boundary hyperbolic (so $ g \in \PSL(2,\IZ)_q$ is arbitrary), so the axis $ \Ax(gAg^{-1}) $ joins the $g$-translates
  of the fixed points of $A$. By \zcref{thm:holomotion}, as $ q \to -1 $ along the ray $ (-\infty,-1) $ these two fixed points converge to a single parabolic fixed point of $ \PSL(2,\IZ) $.
  Thus they are a pair of left and right $q$-rationals and so there exists some $ r/s \in \IQ $ such that $ \Ax (gAg^{-1}) = W_{r/s} \Ax(A) $. By the same argument to the first
  paragraph, but now taking powers of $ W_{r/s} A W_{r/s}^{-1} $, $ W_{r/s} \Ax(A) $ also lies in $\overline{L_q}$. Hence every boundary
  hyperbolic axis of $ \PSL(2,\IZ)_q$ lies in $\overline{L_q}$.

  We now observe that $ \overline{L_q} $ contains every $q$-irrational. Indeed, every $q$-irrational $ \xi $ is an accumulation point of the sequence $ W_{\alpha_i} Q $
  for rational approximants $ \alpha_i \to \xi $. This implies that $\overline{L_q}$ is a convex set (any union of ideal triangles in $ \IH^2 $ is convex) which
  meets every limit point of $ \PSL(2,\IZ)_q $, and is bounded by the
  set of boundary hyperbolic axes. It is therefore the hyperbolic convex hull of the limit set of $ \PSL(2,\IZ)_q $, and it is well-known that this is a homeomorphic image of $ \overline{\IH^2} $, in fact
  a contraction (to construct the homeomorphism for some Fuchsian group $ G $, modify the nearest point retraction $ \Omega(G) \inter \overline{\IH^2} \to \hconv \Lambda(G) $ by pushing parts of $\hconv \Lambda(G)$
  near faces inside $ \IH^2 $ further into the interior).
\end{proof}

We can deform $ q $ into $ \mc{Q} \setminus \IR $ and continue to look for realisations for the Farey tessellation. There are a couple of options. First, one could take the convex
hull in $ \IH^3 $ rather than $ \IH^2 $. This will carry additional combinatorial data that is not related to the Farey tessellation (namely, it will be `folded' according to the position
of $q \in \mc{Q} $)~\cite[Chapter~8]{thurstonN}. Alternatively, one could take the convex hull with respect to the Poincar\'e metric~\cite[Theorem 1-9]{ahlfors} on one of the quasidiscs
preserved by $ \PSL(2,\IZ)_q $---but this will not have circular arcs as boundary curves, just analytic curves.

\sloppy\printbibliography

\end{document}